\newcommand{\CASE}[1]{\STATE \textbf{case} #1\textbf{:} \begin{ALC@g}}
\newcommand{\ENDCASE}{\end{ALC@g}}
\newcommand{\DEFAULT}{\STATE \textbf{default:} \begin{ALC@g}}
\newcommand{\ENDDEFAULT}{\end{ALC@g}}
\newcommand{\DEFAULTLINE}[1]{\STATE \textbf{default:} }
\newcommand{\diag}{\mathop{\mathrm{diag}}}
\newcommand{\rank}{\mathop{\mathrm{rank}}}
\newcommand{\IR}{\mathbb{R}}
 \newcommand{\IC}{\mathbb{C}}
\newcommand{\htwo}{{\mathcal{H}_2}}
\newcommand{\hinf}{{\mathcal{H}_\infty}}
\newcommand{\pvec}{\mathsf{p}}
\newcommand{\Ffull}{\EuScript{F}}
\newcommand{\Hfull}{\EuScript{H}}
\newcommand{\Efull}{\EuScript{E}}
\newcommand{\Gfull}{\EuScript{G}}
\newcommand{\Hred}{\widehat{\Hfull}}
\newcommand{\Ered}{\widehat{\Efull}}
\newtheorem{theorem}{Theorem}
\newtheorem{proposition}{Proposition}
\newtheorem{corollary}{Corollary}
\newtheorem{example}{Example}
 \newtheorem{remark}{Remark}
\title{Sampling-free parametric model reduction \\
for structured systems}
\author{Christopher Beattie,  Serkan Gugercin, and Zoran Tomljanovi\'{c}}
\begin{document}
\maketitle
%
\begin{abstract}
We consider the reduction of parametric families of linear dynamical systems having an affine parameter dependence that differ from one another by
a low-rank variation in the state matrix. Usual approaches for parametric model reduction typically involve exploring the parameter space to isolate representative models on which to focus model reduction methodology, which are then combined in various ways in order to interpolate the response from these representative models.  The initial exploration of the parameter space can be a forbiddingly expensive task.   

A different approach is proposed here that does not require any parameter sampling or exploration of the parameter space. Instead, we represent the system response in terms of four subsystems that are \emph{nonparametric}.
One may apply any one of a number of standard (nonparametric) model reduction strategies to reduce the subsystems independently, and then conjoin these reduced models with the underlying parameterized representation to obtain an overall parameterized response. Our approach has elements in common with the parameter mapping approach of Baur et al. \cite{baur2014mapping}, but offers greater flexibility and potentially greater control over accuracy.  
In particular, a data-driven variation of our approach is described that exercizes this flexibility through the use of limited frequency-sampling of the underlying nonparametric models.  The parametric structure of our system representation allows for \emph{a priori} guarantees of system stability the resulting parametric reduced models, uniformly across all parameter values.  Incorporation of system theoretic error bounds allow us to determine appropriate approximation orders for the nonparametric systems sufficient to yield uniformly high accuracy with respect to parameter variation. 

We illustrate our approach on a class of structural damping optimization problems and on a benchmark model of  thermal conduction in a semiconductor chip. 
 The parametric structure of our reduced system representation lends itself very well to the development of optimization strategies making use of efficient cost function surrogates. We discuss this in some detail for damping parameter and location optimization for vibrating structures.

\end{abstract}
{{\textbf{Keywords}: parametric model reduction, sampling-free; damping optimization, structured systems\\}
{\textbf{Mathematics Subject Classification (2010)}: 93C05; 49J15; 70Q05; 70H33}

\section{Introduction}
Consider a linear time invariant dynamical system, parameterized with a $k$-dimensional parameter vector 
$\pvec=\begin{bmatrix}p_1, p_2, \ldots, p_k\end{bmatrix}^T\in \Omega \subseteq \IR^{k}$ 
and represented in state-space form as
\begin{align}\label{Mainsystem}
 \begin{split}
 E  \dot x(t;\pvec) &= A(\pvec)x(t;\pvec) + B w(t),\\
y(t;\pvec) &= Cx(t;\pvec),
\end{split}
\end{align}
where  $E, A(\pvec) \in \mathbb{R}^{n\times n}$, $B\in \IR^{n\times m} $ and $C\in \IR^{\ell\times n}$ are constant (time-invariant) matrices. In \eqref{Mainsystem}, $ x(t;\pvec)\in \IR^{n} $, $ u(t)\in \IR^{m}$ and $y(t;\pvec)\in \IR^{\ell}$ 
denote,  the  state vector, inputs, and outputs, respectively.
We assume throughout that the matrix $E$ is invertible.

\subsection{Basic structure}
The structural feature of the system that we will exploit extensively presumes that the system matrix $A(\pvec)$ has the parametric form
\begin{equation}\label{Apstructure}
A(\pvec)=A_0-U\,\diag (p_1, p_2, \ldots, p_k)V^T=A_0-\sum_{i=1}^k p_i u_i v_i^T,
\end{equation}
where  $U=\begin{bmatrix}u_1, u_2, \ldots, u_k\end{bmatrix} \in \IR^{n \times k}$ and $V =\begin{bmatrix}v_1, v_2, \ldots, v_k\end{bmatrix}  \in \IR^{n \times k}$ are constant matrices with  $u_i, v_i\in \IR^{  k}$  for $i=1,\ldots,k$. Note that the parameterization in \eqref{Apstructure} is a special case of a general affine parametrization $A(\pvec) = A_0-\sum_{i} p_i A_i$ with the added rank constraint that the matrices $A_i$ have rank-$1$.  However, even for a general affine parametrization with $A_i \in \IR^{n\times n}$ individually having unrestricted rank but in aggregate having $\mathsf{rank}(\sum_{i} p_i A_i)=k \ll n$, the form of \eqref{Apstructure} may be assumed without loss of generality (allowing for the possibility that the parameters are replaced by functions of $\{p_i\}_i$). 
 The condition $k \ll n$ is a practical constraint leading to the prospect of computational efficiency, but there is no theoretical restriction on the size of $k$. 

Taking the Laplace transform of 
\eqref{Mainsystem}, the full-order transfer function of the parametrized system is obtained as 
\begin{equation}\label{TFfull}
\Hfull(s;\pvec)=C(s E -A(\pvec))^{-1}B = 
C\left(s E - \left( A_0-U\,\diag (p_1, p_2, \ldots, p_k)V^T\right)\right)^{-1}B.
\end{equation}
The goal of parametric model reduction, in this setting, is to
find a reduced parametric system 
\begin{align} \label{Redsystem}
 \begin{split}
 E_r  \dot x_r(t;\pvec) &= A_r(\pvec)x_r(t;\pvec) + B_r w(t),\\
y_r(t;\pvec) &= C_r x_r(t;\pvec),
 \end{split}
\end{align}
where $E_r, A_r(\pvec) \in \IR^{r\times r}$, $B_r\in \IR^{r\times m} $ and $C_r\in \IR^{\ell\times r}$ 
with $r \ll n$ such that the reduced transfer function
$$
\Hfull_r(s;\pvec)=C_r(s E_r -A_r(\pvec))^{-1}B_r
$$
approximates $\Hfull(s;\pvec)$ accurately for the parameter range of interest.

Several approaches to parametric model order reduction (\textsf{pMOR}) exist. One of the most common approaches involves state-space projection using globally defined bases: 
Choose a set of parameter samples $\pvec^1,\ldots, \pvec^{n_s}$. For every parameter sample $\pvec^i$, the full-order model $\Hfull(s;\pvec^k)$ becomes a non-parametric linear time-invariant system, for which 
a plethora of model reduction methods are available. 
Whatever choice is made, let $Z_r^i$ and $W_r^i$ denote the \emph{local} model reduction bases for the parameter sample $\pvec^i$, for each $i=1,\ldots,n_s$.  Then concatenate these local bases to form the \emph{global} model reduction bases $Z_r$ and $W_r$:   $Z_r=[Z_r^1,\ldots,Z_r^{n_s}]$ and $W_r=[W_r^1,\ldots, W_r^{n_s}]$. 
This concatenation step is usually followed by a rank-revealing $QR$ or truncated SVD computation 
to compute and condense orthogonal bases. 
The parametric reduced model quantities in \eqref{Redsystem} are obtained via a Petrov-Galerkin projection, i.e.,
\begin{align} \label{eq:redabce}
E_r=W_r^T E Z_r,\qquad   A_r(\pvec)=W_r^T A(\pvec) Z_r,\qquad
 B_r &=W_r^T B, \qquad \mbox{and} \quad   C_r=C Z_r.
\end{align}

Reviews of methods that consider such a reduction framework can be found, e.g. in \cite{morBauBBG11,BennerGW15,Antoulas01asurvey,Feng05,ANT05,BCOW17,QuarteroniMN16,BenMS05,BeattieGugercinSCL09,AntBG20}.
 These approaches are widely studied especially for structured systems with particular applications; see, e.g., \cite{BennerGW15,Antoulas01asurvey,BCOW17,QuarteroniMN16,YueMeer13SIAM,TomljBeattieGugercin18,BennerKuerTomljTruh15}.

The global basis approach has been successfully applied in many circumstances requiring parametric model reduction and in some cases it may be the only viable approach. Nonetheless it comes with some drawbacks, the main issue being the need to sample the parameter domain adequately in order to construct representative local bases. 
Except for special cases
\cite{morBauBBG11,grimm2018parametric}, how one chooses optimal parameter sampling points with respect to a joint global frequency-parameter error measure has not been known until recently. In \cite{morHunMS18},
Hund et al. tackles this joint-optimization problem by deriving optimality conditions and then
constructing model reduction bases that enforce those conditions.  
The most widely used approaches for 
global basis construction in  \textsf{pMOR}
are greedy or optimization-based sampling strategies; see \cite{BennerGW15} for a survey. However, especially in the case of high-dimensional parameter domains, this off-line 
stage could prove prohibitively expensive since it requires a large-number of full-order function evaluations. One may try to avoid these high-fidelity sampling techniques and pick parameter samples heuristically to make the off-line stage less costly. However, since the global bases directly depend on this initial sampling, which in turn influences the final accuracy of the parametric reduced model, if this stage is not done properly the reduced model could not be expected to provide a good approximation over a wide parameter range.

In this paper, we focus on systems having the special structure described in \eqref{Apstructure}. We develop a novel  parametric model order reduction approach that is sampling free (that is, there is no need for parameter sampling) yet it still offers uniformly high fidelity across the full parameter range. Significantly, the reduced model retains the parametric structure of the original full model.

\subsection{A motivating example: Damping optimization} 
\label{sec:motivexample}

Consider the  vibrational system  described by
\begin{align}\label{MDK}
 \begin{split}
M\ddot q(t)+D\dot q(t)+Kq(t)&=B_2 w(t),\\
y(t)&=C_2q(t),
 \end{split}
\end{align}
where $M$ and   $K$ are real, symmetric positive definite matrices of size $n\times n$, denoting the mass and stiffness matrices, respectively.  The state variables are described by the coordinate vector $q\in \IR^n$ representing structure displacements. The time dependent vector  $w(t)\in \IR^m$ is  the primary excitation and typically represents an input disturbance.  $B_2\in \IR^{n\times m}$ is the \emph{primary excitation matrix}, i.e., the input-to-state mapping. Similarly, $y(t)\in \IR^\ell$ is the performance output, representing a quantity of interest that is obtained from the state-vector $q$ via a mapping by the state-output matrix 
$C_2  \in
\IR^{\ell \times n}$. 

The damping matrix  $D \in\IR^{n \times n} $ is modeled as $$D=D_{int} + D_{ext},$$ where $D_{ext}$ represents  external damping and $D_{int}$ represents internal damping. The internal damping $D_{int}$ is usually taken to be a small multiple of the critical damping denoted by
$D_{crit}$  or a small multiple of proportional damping (see,  e.g.,  \cite{ BennerTomljTruh11, BennerKuerTomljTruh15}):
\begin{equation}\label{C_{int}}
  D_{int} = \alpha_c D_{crit},\quad \mbox{where} \quad D_{crit} =2 M^{1/2}\sqrt{M^{-1/2}KM^{-1/2}}M^{1/2}.
\end{equation}
Other possibilities for modeling internal damping can be found, e.g., in \cite{KuzmTomljTruh12}. 

We are mainly interested in in the external damping of the type $$D_{ext}=U_2\diag{(p_1, p_2, \ldots, p_k)}U_2^T,$$
where the non-negative entries $p_i$  for $i=1,\ldots,k$ represent the friction   coefficients  of the dampers, usually called gains or viscosities, and  the matrix $U_2$   encodes the 
damper positions and geometry; for more details, see, e.g., \cite{BennerKuerTomljTruh15, Blanchini12,TomljBeattieGugercin18,VES2011,BennerTomljTruh10,MullerSchiehlen85}.

In damping optimization problems, the   major goal is to determine \emph{best/optimal} external  damping
matrix $D_{ext}$ that will minimize the influence of the input $w$  on the output $y$. One can consider different optimality measures. In the input-output dynamical systems settings, the optimization criteria are usually based on system norms such as $\mathcal{H}_2$ or $\mathcal{H}_\infty$ system norm (see, e.g., \cite{Blanchini12, BennerKuerTomljTruh15}). Moreover, mixed performance measure that was introduced in \cite{NTT19}.  This specific choice of  optimization criteria strongly depends on the application at hand. \textsf{pMOR} methods we will develop in this paper will allow using different optimization criteria and therefore will enable different applications.  

By defining the state-vector as $x=[q^T~\dot q^T]^T$  we obtain a first-order state-space representation of the vibrational system:
\begin{align*}
E \dot x(t)&=A(\pvec) x(t)+Bw(t),   \\
y(t)&=Cx(t)\nonumber,
\end{align*}
where
\begin{align}  \label{msd1}
E&=\left[%
\begin{array}{cc}
  I & 0  \\
  0 & M \\
\end{array}%
\right], \quad  B=\left[%
\begin{array}{c}
  0 \\
 B_2 \\
\end{array}%
\right],\quad
C =\left[
         \begin{array}{cc}
           C_2 & 0 \\
         \end{array}
       \right],\\[1ex] \label{msd2}
 \mbox{and} \quad  A(\pvec)&=\underbrace{\left[%
\begin{array}{cc}
  0 & I \\
  -K & -D_{int} \\
\end{array}%
\right]}_{A_0}-\underbrace{\left[%
\begin{array}{c}
  0   \\
   U_2  \\
\end{array}%
\right]}_{U}\diag(p_1, p_2, \ldots, p_k) \underbrace{\left[\begin{array}{cc}
  0   &     U_2^T  \\
\end{array}\right]}_{V^T = U^T},
\end{align}
with $ \pvec=\begin{bmatrix}p_1, p_2, \ldots, p_k\end{bmatrix}^T$. Note that the model for damping optimization  in \eqref{msd1}-\eqref{msd2} has the parametric structure described in \eqref{Apstructure}.

The optimization of damper locations can be formulated effectively as optimization over a finite (but potentially large) number of configurations for the matrix $B_2$; this is a demanding combinatorial optimization problem and for each $B_2$-configuration, one must optimize over $\pvec$, the parameter vector.  \textsf{pMOR} approaches seeking to make this task cheaper have been considered previously: For optimization based on the $\mathcal{H}_2$ norm criterion, \cite{BennerKuerTomljTruh15} used a global basis approach, as described above, where local bases were obtained via the dominant pole algorithm \cite{RommesM08}. Using the same optimization criterion, \cite{TomljBeattieGugercin18} applied a global basis approach where local bases were obtained 
via the Iterative Rational Krylov Algorithm (\textsf{IRKA}) \cite{GugAB08}, an $\mathcal{H}_2$-optimal model reduction approach. Even though both approaches show great promise,  success in each case depends on the initial parameter sampling used to construct the global basis, an issue that is faced in most \textsf{pMOR} cases. For the damping optimization
problem, an efficient heuristic that can guide a parameter sampling strategy is not available, and the natural alternative, a preliminary offline greedy sampling stage, can be computationally very demanding and potentially negate the gains one would anticipate from model reduction.

In subsequent sections, we will propose two frameworks 
that will remove the need for parametric sampling in problems structured
 as in \eqref{Apstructure}, including in particular the damping optimization problem discussed above. Thus, the new approach will allow efficient optimization of damping parameters encoded in the  vector $\pvec$.

\section{PMOR based on subsystem model reduction} \label{sec:method1}
In this section, we introduce our first sampling-free reduction method for the structured problem \eqref{TFfull}. We provide error bounds and also discuss the uniform stability of the reduced model.
\subsection{Reformulation of the parametric transfer function}
The crucial observation and the starting point behind our framework is that we can rewrite  the structured transfer function \eqref{TFfull} in a form that separates the $s$ and $\pvec$ dependency, by making use of the  Sherman-Morrison-Woodbury formula \cite{GVL89}. We summarize this result in the following proposition. 
\begin{proposition} \label{swmonH}
Consider the structured transfer function 
\begin{equation} \label{TFfull2}
\Hfull(s;\pvec)= C\left(s E - \left( A_0-U\,\diag (p_1, p_2, \ldots, p_k)V^T\right)\right)^{-1}B,
\end{equation}
where $p_i \in \IR_{+}$, for $i=1,2,\ldots,k$. Then, 
\begin{align}\label{TFSMWstile}
\Hfull(s;\pvec)&=\Hfull_1(s)- \Hfull_2(s) D(\pvec)  \left[I+ D(\pvec) \Hfull_3(s) D(\pvec)\right]^{-1} D(\pvec) \Hfull_4(s),
\end{align}
where the diagonal matrix 
\begin{equation} \label{Dp}
D(\pvec)= \diag (\sqrt p_1,\sqrt p_2, \ldots,\sqrt p_k)
\end{equation}
encodes the parameters, and 
$\Hfull_1(s)$, $\Hfull_2(s)$, $\Hfull_3(s)$, and $\Hfull_4(s)$
are non-parametric transfer functions given by
\begin{align}\label{H1-H4}
 \begin{split}
\Hfull_1(s)&=C  (s E - A_0)^{-1}B,\phantom{~\mbox{and}}   \qquad\,\,\Hfull_2(s)  = C  (s E - A_0)^{-1}U,\\[2mm]
\Hfull_3(s)&=V^T  (s E - A_0)^{-1}U,~\mbox{and}  \qquad\Hfull_4(s)  = V^T (s E - A_0)^{-1}B.
 \end{split}
\end{align}
\end{proposition}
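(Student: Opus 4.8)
The plan is to obtain \eqref{TFSMWstile} directly from \eqref{TFfull2} by applying the Sherman--Morrison--Woodbury identity to the resolvent $(sE-A(\pvec))^{-1}$. The key structural observation is that, since $p_i\ge 0$ for every $i$, the diagonal parameter matrix factors symmetrically as $\diag(p_1,\dots,p_k)=D(\pvec)D(\pvec)^T$ with $D(\pvec)$ the \emph{real} diagonal matrix in \eqref{Dp}. Consequently the rank-$k$ term in $A(\pvec)$ can be written as $U\diag(p_1,\dots,p_k)V^T=\bigl(UD(\pvec)\bigr)\bigl(VD(\pvec)\bigr)^T$, so that
\[
sE-A(\pvec)=(sE-A_0)+\bigl(UD(\pvec)\bigr)\bigl(VD(\pvec)\bigr)^T.
\]
Abbreviating $\mathcal{A}(s):=sE-A_0$, which is invertible for all $s$ outside the (finite) spectrum of the pencil $(A_0,E)$ because $E$ is invertible, this exhibits $sE-A(\pvec)$ as a rank-$k$ update of an invertible matrix, precisely the setting for Woodbury's formula.

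First I would apply the Woodbury identity with inner matrix equal to the identity,
\[
(sE-A(\pvec))^{-1}=\mathcal{A}(s)^{-1}-\mathcal{A}(s)^{-1}UD(\pvec)\bigl[I+D(\pvec)V^T\mathcal{A}(s)^{-1}UD(\pvec)\bigr]^{-1}D(\pvec)V^T\mathcal{A}(s)^{-1},
\]
then multiply on the left by $C$ and on the right by $B$ and read off the four non-parametric blocks from \eqref{H1-H4}: $C\mathcal{A}(s)^{-1}B=\Hfull_1(s)$, $C\mathcal{A}(s)^{-1}U=\Hfull_2(s)$, $V^T\mathcal{A}(s)^{-1}U=\Hfull_3(s)$, and $V^T\mathcal{A}(s)^{-1}B=\Hfull_4(s)$; in particular the bracketed factor becomes $I+D(\pvec)\Hfull_3(s)D(\pvec)$. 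Substituting these identifications yields exactly \eqref{TFSMWstile}.

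The computation itself is routine, so the only point that genuinely needs care is the domain of validity of the rearrangement, i.e.\ justifying that $I+D(\pvec)\Hfull_3(s)D(\pvec)$ is invertible wherever it appears. This follows from the matrix determinant lemma, $\det\bigl(I+D(\pvec)V^T\mathcal{A}(s)^{-1}UD(\pvec)\bigr)=\det(\mathcal{A}(s))^{-1}\det(sE-A(\pvec))$, so the factor is nonsingular exactly when both $sE-A_0$ and $sE-A(\pvec)$ are nonsingular, which fails only on a finite set of $s$. Since both sides of \eqref{TFSMWstile} are rational in $s$ for fixed $\pvec$ and coincide on this cofinite set, they agree identically. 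This is also the only place the hypothesis $p_i\ge 0$ enters: it makes $D(\pvec)$ real; for general real $p_i$ one would instead split $U\diag(p_1,\dots,p_k)\cdot V^T$ and obtain an analogous but non-symmetric formula.
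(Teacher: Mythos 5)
Your proposal is correct and follows essentially the same route as the paper: apply the Sherman--Morrison--Woodbury formula with $T=sE-A_0$, $X=UD(\pvec)$, $Y=VD(\pvec)$, and then identify the four non-parametric transfer functions in \eqref{H1-H4}. The only difference is that you additionally justify the invertibility of $I+D(\pvec)\Hfull_3(s)D(\pvec)$ via the determinant identity and rational continuation, a point the paper's proof leaves implicit.
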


\begin{proof}
Let $T \in \IC^{n\times n}$ be invertible. Also, let
$X \in \IC^{n\times k}$ and $Y \in \IC^{n\times k}$ be such that  
$I_k + Y^T T^{-1}X$ is invertible. Then,
Sherman-Morrison-Woodbury formula states that
$$
(T + X Y^T )^{-1} = T^{-1} - T^{-1} X (I_k + Y^T T^{-1}X)^{-1}Y^T T^{-1}. 
$$
Recall that  
$ \Hfull(s;\pvec)= C\left(s E - \left( A_0-U\,\diag (p_1, p_2, \ldots, p_k)V^T\right)\right)^{-1}B$. The result, then, follows from the Sherman-Morrison-Woodbury formula by defining $T = s E - A_0$, $X = U D(p)$, and $Y = VD(p)$.
\end{proof}

\begin{remark} 
Notice that if we define the extended parameterized transfer function
$$
\widetilde{\Ffull}(s;\pvec)=
\left[\begin{array}{cc}
\Hfull_1(s) &  \Hfull_2(s) D(\pvec) \\[2mm]
D(\pvec) \Hfull_4(s) & I+ D(\pvec) \Hfull_3(s) D(\pvec)
\end{array}\right],
$$
then \eqref{TFSMWstile} is the Schur complement of 
$\widetilde{\Ffull}(s;\pvec)$ with respect to the (2,2) block:
$$
\Hfull(s;\pvec)=\left[\widetilde{\Ffull}(s;\pvec)/(I+ D(\pvec) \Hfull_3(s) D(\pvec))\right].
$$
\end{remark}

\begin{remark}
Motivated by the damping optimization problem described in Section \ref{sec:motivexample}, Proposition \ref{swmonH}  assumes that the parameter vector $\pvec$ has positive entries, which lead to the form 
\eqref{TFSMWstile} where the diagonal matrix $D(p)$ appear in a balanced symmetric way throughout the second term. However, the positive parameter range assumption is not necessary and  the general case could be easily handled in a similar way. Define 
$$\widetilde{D}(\pvec) = {\rm diag}(p_1,p_2,\ldots,p_k).$$ Then,
\begin{align}\label{TFSMWstilenegativep}
\Hfull(s;\pvec)&=\Hfull_1(s)- \Hfull_2(s) \widetilde{D}(\pvec)  \left[I+\Hfull_3(s) \widetilde{D}(\pvec)\right]^{-1} \Hfull_4(s),
\end{align}
where $\Hfull_1(s)$, $\Hfull_2(s)$, $\Hfull_3(s)$, and $\Hfull_4(s)$ are as defined 
in \eqref{H1-H4}. In the rest of the paper, we will use the formulation in
Proposition \ref{sec:motivexample}, but all the results to follow can be easily generalized using the form \eqref{TFSMWstilenegativep}.
\end{remark}

\subsection{Subsystem model reduction}
\Cref{swmonH} displays a decomposition of the full-order transfer function $\Hfull(s;\pvec)$ in terms of four \emph{non-parametric} transfer functions with the parameter dependency entering as an interconnection coupling the four systems. Since $\Hfull_i(s)$, for $i=1,2,3,4$, are non-parametric, they may be reduced without any need for sampling via well-established model reductions techniques such as balanced truncation (\textsf{BT}) \cite{mullis1976synthesis,moore1981principal}, Hankel norm approximation (\textsf{HNA}) \cite{glover1984all}, or iterative rational Krylov algorithm \textsf{IRKA} \cite{GugAB08}; see \cite{ANT05,BCOW17,Antoulas01asurvey}
for further choices.  Moreover, each system $\Hfull_i(s)$, may be reduced independently of the others, potentially using different reduction orders and even different reduction methodologies.

Let the reduced model for $\Hfull_i(s)$ be denoted by $\Hred_i(s)$, for $i=1,\ldots,4$. 
The resulting parametric reduced model for 
$\Hfull(s;\pvec)$ is given by 
  \begin{equation}\label{TFSMWreduced}
   \Hfull(s;\pvec) \approx \Hred(s;\pvec) = \Hred_1(s)- \Hred_2(s) D(\pvec)  (I+ D(\pvec) \Hred_3(s) D(\pvec))^{-1} D(\pvec) \Hred_4(s).
  \end{equation}
The online evaluation (or simulation) of   $\Hred(s;\pvec)$ for a given parameter value is trivial as it only involves reduced quantities and evaluation of the matrix $D(\pvec)$. Therefore, we have constructed a parametric, easy-to-evaluate reduced model without any need for parameter sampling. \Cref{Method1} below gives a  sketch of this process.

\begin{algorithm}[htp] 
 \caption{Parametric reduced order model based on reduction of subsystems}   
\label{Method1}                                            
\begin{algorithmic} [1]                                        

\STATE  \label{1st} \textsf{Off-line Stage:} Calculate the four non-parametric reduced systems
          \begin{align*}
      \Hfull_1(s) \rightarrow \Hred_1(s),\hspace{2ex}
    \Hfull_2(s) \rightarrow \Hred_2(s), \hspace{2ex}
        \Hfull_3(s)  \rightarrow \Hred_3(s), \hspace{2ex}
    \Hfull_4(s)   \rightarrow \Hred_4(s),
     \end{align*}
     (Reductions can be performed via a variety of nonparametric reduction techniques)  
\STATE \label{2nd}\textsf{On-line Stage:} For any given parameter $\pvec=\begin{bmatrix}p_1, p_2, \ldots, p_k\end{bmatrix}^T$, obtain the parametric model by
$$   \Hfull(s;\pvec) \approx \Hred(s;\pvec) = \Hred_1(s)- \Hred_2(s) D(\pvec)  (I+ D(\pvec) \Hred_3(s) D(\pvec))^{-1} D(\pvec) \Hred_4(s).
 $$
\end{algorithmic}
\end{algorithm}

Some remarks are in order regarding 
Step \ref{1st} in \Cref{Method1}.
The model $\Hfull_1(s)$ has the same input-output dimension as $\Hfull(s;\pvec)$. The model $\Hfull_2(s)$ has the same number of outputs ($\ell$) as $\Hfull(s;\pvec)$ and $k$ inputs. Similarly,  the model $\Hfull_4(s)$ has the same number of inputs ($m$) as $\Hfull(s;\pvec)$, and also $k$ outputs. Provided that the input/output dimension is modest, reducing $\Hfull_1(s)$, $\Hfull_2(s)$, and 
$\Hfull_4(s)$ will not be expected to be strongly influenced  by the size of $k$ since in most cases the smaller of the input/output dimensions determines the difficulty in reducing a dynamical systems. On the other hand, the model $\Hfull_3(s)$ will have $k$-inputs and $k$-outputs. Therefore, if $k$ is significantly  larger than $\ell$ or $m$, this is likely to be the most difficult model to reduce with high fidelity.  Therefore, although
the framework and theoretical analysis we develop here apply to a system with transfer function
$$
\Hfull(s;\pvec) = C(s E - (A_0- p A_1))^{-1} B
$$
\emph{even when $A_1$ has full rank}, computational difficulties might arise since
$\Hfull_3(s)$ will then be an $n$-dimension dynamical system with $n$-inputs and $n$-outputs. Such models will not generally be amenable to model reduction in most scenarios. 

Next we provide an error bound for the parametric model reduction due to \Cref{Method1}.
\begin{theorem} \label{upper bound}
Let the full order transfer function $\Hfull(s; \pvec)$, and the corresponding subsystems $\Hfull_i(s)$, for $i=1,\ldots,4$  be given as in \eqref{TFSMWstile} and \eqref{H1-H4}. Assume that
the nonparametric reduced models $\Hfull_i(s)$ are reduced so that
\begin{equation}
\|\Hfull_i - \Hred_i \| \leq \epsilon_i,~~~\mbox{for}~~~i=1,\ldots,4,
\end{equation}
and that the corresponding parametric reduced model $\Hred(s;\pvec)$  is constructed as in   \eqref{TFSMWreduced}. Then,
\begin{align}
\label{upperbound}
 \|\Hfull(\cdot; \pvec) )-\Hred(\cdot;\pvec))\|\leq   \epsilon_1&   + f_1(\pvec,\Hred_3,\Hred_4) \epsilon_2  +
 f_1(\pvec,\Hred_3,\Hred_4)  f_2(\pvec,\Hfull_2,\Hfull_3)\epsilon_3 + 
 f_2(\pvec,\Hfull_2,\Hfull_3)\epsilon_4
\end{align}
where 
 \begin{align}\label{DefFunf1}
f_1(\pvec,\Gfull_1,\Gfull_2)&=\|D(\pvec)\big (I+D(\pvec)\Gfull_1(\cdot)D(\pvec)\big )^{-1} D(\pvec)\Gfull_2(\cdot)\|,~~\mbox{and}\\
f_2(\pvec,\Gfull_1,\Gfull_2   )&=\|\Gfull_1(\cdot) D(\pvec) \big ( I+D(\pvec)\Gfull_2(\cdot) D(\pvec) \big )^{-1}D(\pvec) \|.\label{DefFunf2}
\end{align}
\end{theorem}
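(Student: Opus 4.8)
The plan is to prove \eqref{upperbound} by a telescoping (hybrid) argument in which the four nonparametric subsystems are swapped for their reduced counterparts one at a time. To organize the bookkeeping, introduce the auxiliary map
\[
\Phi(\Gfull_1,\Gfull_2,\Gfull_3,\Gfull_4)(s;\pvec) = \Gfull_1(s) - \Gfull_2(s)D(\pvec)\bigl(I+D(\pvec)\Gfull_3(s)D(\pvec)\bigr)^{-1}D(\pvec)\Gfull_4(s),
\]
so that, by \eqref{TFSMWstile} and \eqref{TFSMWreduced}, $\Hfull(\cdot;\pvec)=\Phi(\Hfull_1,\Hfull_2,\Hfull_3,\Hfull_4)$ and $\Hred(\cdot;\pvec)=\Phi(\Hred_1,\Hred_2,\Hred_3,\Hred_4)$, the inverse in the latter being well defined by construction of the reduced model. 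Let $\Delta_i$ denote the change in $\Phi$ caused by replacing $\Hred_i$ by $\Hfull_i$ when, in $\Delta_i$, the slots with index $<i$ already hold $\Hfull_j$ and those with index $>i$ still hold $\Hred_j$. Then $\Hfull(\cdot;\pvec)-\Hred(\cdot;\pvec)=\sum_{i=1}^{4}\Delta_i$, whence $\|\Hfull(\cdot;\pvec)-\Hred(\cdot;\pvec)\|\le\sum_{i=1}^{4}\|\Delta_i\|$ by the triangle inequality.

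First I would dispatch the three ``linear'' terms. For $i=1$ the first argument of $\Phi$ enters alone and linearly, so $\Delta_1=\Hfull_1-\Hred_1$ and $\|\Delta_1\|\le\epsilon_1$. For $i=2$ the second argument enters linearly with the third and fourth slots already reduced, giving $\Delta_2=-(\Hfull_2-\Hred_2)D(\pvec)\bigl(I+D(\pvec)\Hred_3 D(\pvec)\bigr)^{-1}D(\pvec)\Hred_4$, so by submultiplicativity $\|\Delta_2\|\le\epsilon_2\,f_1(\pvec,\Hred_3,\Hred_4)$. For $i=4$ the fourth argument enters linearly with the second and third slots already the full models, giving $\Delta_4=-\Hfull_2 D(\pvec)\bigl(I+D(\pvec)\Hfull_3 D(\pvec)\bigr)^{-1}D(\pvec)(\Hfull_4-\Hred_4)$, so $\|\Delta_4\|\le f_2(\pvec,\Hfull_2,\Hfull_3)\,\epsilon_4$.

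The only genuinely nonlinear term is $i=3$, where $\Delta_3=-\Hfull_2 D(\pvec)\bigl[(I+D(\pvec)\Hfull_3 D(\pvec))^{-1}-(I+D(\pvec)\Hred_3 D(\pvec))^{-1}\bigr]D(\pvec)\Hred_4$. Here I would apply the second resolvent identity $X^{-1}-Y^{-1}=X^{-1}(Y-X)Y^{-1}$ with $X=I+D(\pvec)\Hfull_3 D(\pvec)$ and $Y=I+D(\pvec)\Hred_3 D(\pvec)$, noting $Y-X=D(\pvec)(\Hred_3-\Hfull_3)D(\pvec)$. This rewrites $\Delta_3$ as
\[
\bigl[\Hfull_2 D(\pvec)\bigl(I+D(\pvec)\Hfull_3 D(\pvec)\bigr)^{-1}D(\pvec)\bigr]\,(\Hfull_3-\Hred_3)\,\bigl[D(\pvec)\bigl(I+D(\pvec)\Hred_3 D(\pvec)\bigr)^{-1}D(\pvec)\Hred_4\bigr],
\]
and the two bracketed factors are exactly the transfer functions whose norms are $f_2(\pvec,\Hfull_2,\Hfull_3)$ and $f_1(\pvec,\Hred_3,\Hred_4)$ from \eqref{DefFunf1}--\eqref{DefFunf2}; submultiplicativity then yields $\|\Delta_3\|\le f_2(\pvec,\Hfull_2,\Hfull_3)\,\epsilon_3\,f_1(\pvec,\Hred_3,\Hred_4)$. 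Summing the four bounds gives precisely \eqref{upperbound}.

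The main obstacle is essentially bookkeeping rather than analysis: one must pick the substitution order (here $1,2,3,4$) and, in the nonlinear step, the assignment of $X$ and $Y$ so that the surviving factors in each $\Delta_i$ coincide with the transfer-function products named in \eqref{DefFunf1}--\eqref{DefFunf2} --- a different choice produces similarly shaped but wrongly argumented coefficients, e.g.\ $f_2$ evaluated at $(\Hfull_2,\Hred_3)$ or $f_1$ at $(\Hfull_3,\Hfull_4)$. Two minor points also merit a line: that the norm in use is submultiplicative (or, if $\|\cdot\|=\|\cdot\|_{\htwo}$, that the mixed bound $\|FG\|_{\htwo}\le\|F\|_{\htwo}\|G\|_{\hinf}$ is invoked with the bounded factor placed appropriately), and that $I+D(\pvec)\Hred_3(s)D(\pvec)$ is invertible on the relevant domain --- a well-posedness requirement on the reduced subsystems already implicit in \eqref{TFSMWreduced}.
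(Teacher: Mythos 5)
Your proof is correct and is essentially the paper's argument: the telescoping slot-by-slot substitution produces exactly the same four difference terms the paper obtains by adding and subtracting the intermediate quantities $\Efull_1,\Ered_1,\Efull_2,\Ered_2$, with the second resolvent identity playing the role of the paper's identity \eqref{widehatE2-E2} for the $\Hfull_3$-term, and the same norm/submultiplicativity step finishes the bound. Your remarks about invertibility of $I+D(\pvec)\Hred_3(s)D(\pvec)$ and the choice of norm are reasonable housekeeping that the paper leaves implicit.
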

\begin{proof}
First, by using the formulae \eqref{TFSMWstile} and    \eqref{TFSMWreduced}, we obtain 
\begin{align}\label{auxiliary1}
 \Hfull(s;\pvec)-\Hred(s;\pvec)=&  \Hfull_1(s)-\Hred_1(s) +\Hred_2(s)\Ered_1-  \Hfull_2(s) \Efull_1,
\end{align}
where  $\Efull_1=D(\pvec)  (I+ D(\pvec) \Hfull_3(s) D(\pvec))^{-1} D(\pvec) \Hfull_4(s)$ and $\Ered_1= D(\pvec)  (I+ D(\pvec) \Hred_3(s) D(\pvec))^{-1} D(\pvec) \Hred_4(s)$.
The last two terms can be manipulated as 
\begin{align}\nonumber
 \Hred_2(s) \Ered_1- \Hfull_2(s) \Efull_1 &= [\Hred_2(s)-\Hfull_2(s)]\Ered_1+ \Hfull_2(s) (\Ered_1-\Efull_1)\\ \nonumber
 &= [\Hred_2(s)-\Hfull_2(s)]\Ered_1+ \Hfull_2(s) [\Ered_2\Hred_4(s)-\Efull_2\Hfull_4(s)], 
\end{align}
where $ \Efull_2 =D(\pvec)  (I+ D(\pvec) \Hfull_3(s) D(\pvec))^{-1} D(\pvec)$ and $\Ered_2=D(\pvec)  (I+ D(\pvec) \Hred_3(s) D(\pvec))^{-1} D(\pvec) $. This last expression can be rewritten as
\begin{align}
 \Hred_2(s) \Ered_1-  \Hfull_2(s) \Efull_1 &= [ \Hred_2(s)-\Hfull_2(s)]\Ered_1+ \Hfull_2(s) [(\Ered_2-\Efull_2)\Hred_4(s)+\Efull_2(\Hred_4(s)-\Hfull_4(s) )]\label{auxiliary2}.
\end{align}
 Next consider the term $ \Efull_2-\Ered_2$, which can be expressed as
{\small
\begin{align}\label{widehatE2-E2}
\Ered_2-\Efull_2=D(\pvec) (I+ D(\pvec) \Hred_3(s) D(\pvec))^{-1} D(\pvec)[\Hfull_3(s)-\Hred_3(s)]D(\pvec)  (I+ D(\pvec)\Hfull_3(s) D(\pvec))^{-1} D(\pvec).
\end{align}
}

\noindent
Substituting \eqref{widehatE2-E2} into \eqref{auxiliary2}, which is then  substituted into \eqref{auxiliary1}, yields
{\small
\begin{align*}
 \Hfull(\cdot;\pvec)-\Hred(\cdot;\pvec)=  [\Hfull_1(s) - \Hred_1(s)]&   + [\Hfull_2(s) - \Hred_2(s) ]
D(\pvec)\big (I+D(\pvec)\Hred_3(s)D(\pvec)\big )^{-1} D(\pvec)\Hred_4(s)\\
+\Hfull_2(s)& D(\pvec)\big ( I+D(\pvec)\Hfull_3(s) D(\pvec) \big )^{-1}D(\pvec) [\Hred_4(s) - \Hfull_4 (s) ]+\\
+&\Hfull_2(s) D(\pvec)\big ( I+D(\pvec)\Hfull_3(s) D(\pvec) \big )^{-1}D(\pvec)[\Hfull_3(s) - \Hred_3(s)  ] \\
&\qquad\qquad \cdot D(\pvec)\big ( I+D(\pvec)\Hred_3(s) D(\pvec) \big )^{-1}D(\pvec)\Hred_4(s).
\end{align*}
}
The upper bound follows by taking norms on both sides.
\end{proof}

\subsection{Uniform stability of the parametric reduced model} 
\label{sec:stability}
In many applications, the full model $\Hfull(s,\pvec)$ in \eqref{TFfull2}
 is asymptotically stable for every $\pvec \in \Omega$, so that eigenvalues of the matrix pencil
$\lambda E - A(\pvec)$ (poles of the transfer function $\Hfull(s,\pvec)$)
have negative real parts for every $\pvec \in \Omega$. 
The damping optimization problem we considered in Section \ref{sec:motivexample} is one such example where the underlying physical phenomenon is asymptotically stable for every parameter in the parameter domain of interest.  Therefore, it is reasonable to expect the same of the parametric reduced model. We will call this \emph{uniform asymptotic stability} of the reduced model. Unfortunately, except for some special cases (e.g., $E=E^T$ is positive define, $A(\pvec) = A^T(\pvec)$ is negative definite, and one chooses $W_r = Z_r$ in \eqref{eq:redabce}), this property is difficult to  enforce. We refer the reader to \cite[\S 5.4]{BennerGW15} for a brief discussion on this issue.
We show here that our proposed framework will guarantee uniform stability for a significantly broader class of problems. 

Recall the damping optimization model with the space-state quantities defined in
\eqref{msd1}-\eqref{msd2}. Due to the internal damping term, $D_{int}$,  eigenvalues of the matrix pencil $s E - A_0$ have negative real parts; and thus all the subsystems, i.e., $\Hfull_1(s)$, $\Hfull_2(s)$, $\Hfull_3(s)$, and $\Hfull_4(s)$, are asymptotically stable. Then, in reducing these nonparametric subsystems, we can enforce stability preservation by employing, for example, \textsf{BT}~\cite{moore1981principal,mullis1976synthesis}, \textsf{HNA}~\cite{glover1984all}, or \textsf{IRKA} with stability enforcement \cite{gugercin2008isk,GugAB08,beattie2009trm}. Therefore, starting point of our uniform stability result is that all the subsystems are asymptotically stable, as is in the damping  optimization problem. 

To further motivate the setting of our uniform stability result, next we inspect the term $\Hfull_3(s)$ more closely for the damping optimization problem. It directly follows from \eqref{msd1} and \eqref{msd2} that the transfer function $\Hfull_3(s)$ is given by $\Hfull_3(s) = s U_2^T(s^2 M + s D + K)^{-1} U_2$, which corresponds to a positive real
(passive) dynamical system. In other words, $\textsf{Re}(\Hfull_3(s)) \geq 0$ for all $s$ with 
$\textsf{Re}(s) \geq 0$. Therefore,  $I+D(\pvec)\Hfull_3(s)D(\pvec)$ is strictly positive real, i.e., $\textsf{Re}(\Hfull_3(s)) > 0$ for $\textsf{Re}(s) \geq 0$. Using positive real balanced truncation \cite{desai1984transformation,ober1991balanced} or interpolatory port-Hamiltonian model reduction \cite{GugPBS12,EggKLMM18,polyuga2011structure}, one can reduce $\Hfull_3(s)$ in a way to retain positive realness. Motivated by these structures and observations appearing in the  damping optimization problem, we know state the uniform stability result.
\begin{theorem}  \label{thm:stable}
Consider the full parametric model with its transfer function
$\Hfull(s)$ written in terms of the subsystems as in \eqref{TFfull2} where the subsystems $\Hfull_1(s)$,
$\Hfull_2(s)$, and $\Hfull_4(s)$ are asymptotically stable. Further assume that $\Hfull_3(s)$
is positive real. Construct the reduced subsystems
$\Hred_1(s)$, $\Hred_2(s)$, and $\Hred_4(s)$ such that they retain asymptotic stability, and   $\Hred_3(s)$ such that it retains positive-realness.
Then, the reduced parametric model $\Hred(s)$  in \eqref{TFSMWreduced} is uniformly asymptotically stable for 
every $\pvec \in \Omega$.
\end{theorem}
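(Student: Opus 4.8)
The plan is to pass to state-space realizations and produce a Lyapunov certificate for the interconnected reduced model. Fix an arbitrary $\pvec\in\Omega$ with entries $p_i\ge 0$. The first step is to dispose of the degenerate directions: whenever $p_i=0$, the $i$-th row and column of $D(\pvec)$ vanish, so that index plays no role in \eqref{TFSMWreduced}, and one may replace $\Hred_3(s)$ by its principal submatrix indexed by $\{i : p_i>0\}$ --- still positive real, since principal submatrices inherit positive-realness --- and set $F:=D(\pvec)^2=\diag(p_i)_{p_i>0}$, which is then positive definite. So without loss of generality $F\succ 0$. Next, fix minimal realizations $\Hred_i(s)=\hat C_i(sI-\hat A_i)^{-1}\hat B_i+\hat D_i$ (absorbing the invertible reduced $E$-factors into the state). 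Asymptotic stability of $\Hred_1,\Hred_2,\Hred_4$ means $\hat A_1,\hat A_2,\hat A_4$ are Hurwitz; positive-realness of $\Hred_3$ together with minimality yields, via the positive-real (KYP) lemma, a symmetric $P\succ 0$ with
\[
\begin{bmatrix}\hat A_3^{T}P+P\hat A_3 & P\hat B_3-\hat C_3^{T}\\[1mm] \hat B_3^{T}P-\hat C_3 & -(\hat D_3+\hat D_3^{T})\end{bmatrix}\preceq 0 ,
\]
and letting $s\to\infty$ along the real axis in the frequency-domain inequality also gives $\hat D_3+\hat D_3^{T}\succeq 0$.

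The second step is to assemble an explicit realization of $\Hred(s;\pvec)$ directly from \eqref{TFSMWreduced}, read as the parallel combination of $\Hred_1$ with the loop obtained by closing $\Hred_3$ through the static gain $F$ (with $\Hred_4$ injecting $w$ into the loop and $\Hred_2$ reading the loop signal back out). Eliminating the feedthrough --- $I+F\hat D_3$ is invertible because its eigenvalues coincide with those of $I+F^{1/2}\hat D_3 F^{1/2}$, whose Hermitian part is $\succeq I$ by $\hat D_3+\hat D_3^{T}\succeq 0$ --- produces, in the state ordering $(x_1,x_4,x_3,x_2)$, a block lower-triangular state matrix with diagonal blocks $\hat A_1$, $\hat A_4$, $\hat A_{\mathrm{cl}}:=\hat A_3-\hat B_3(I+F\hat D_3)^{-1}F\hat C_3$, and $\hat A_2$. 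Consequently the assembled state matrix is Hurwitz if and only if $\hat A_{\mathrm{cl}}$ is, and uniform asymptotic stability reduces to showing $\hat A_{\mathrm{cl}}$ is Hurwitz for every admissible $F\succ 0$.

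The third, and substantive, step uses $V(x)=x^{T}Px$ as a candidate Lyapunov function for $\dot x=\hat A_{\mathrm{cl}}x$. Writing $y=(I+\hat D_3 F)^{-1}\hat C_3 x$ for the loop output and $\eta=-Fy$ for the loop input (so $\hat A_{\mathrm{cl}}x=\hat A_3 x+\hat B_3\eta$ and $y=\hat C_3 x+\hat D_3\eta$), the KYP inequality evaluated at $(x,\eta)$ gives the dissipation estimate $\dot V\le 2\,y^{T}\eta=-2\,y^{T}Fy\le -2\lambda_{\min}(F)\,\|y\|^2\le 0$. Since $F\succ 0$, $\dot V=0$ forces $\hat C_3 x=0$ and hence $\eta=0$; by LaSalle's invariance principle the trajectory converges to the largest $\hat A_{\mathrm{cl}}$-invariant subspace contained in $\ker\hat C_3$, on which the feedback term vanishes and $\hat A_{\mathrm{cl}}$ acts as $\hat A_3$. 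That subspace is therefore $\hat A_3$-invariant and annihilated by $\hat C_3$, so it is $\{0\}$ by observability of the minimal realization of $\Hred_3$. Hence $V$ is a strict Lyapunov function, $\hat A_{\mathrm{cl}}$ is Hurwitz, the assembled state matrix of $\Hred(s;\pvec)$ is Hurwitz, and since $\pvec\in\Omega$ was arbitrary, $\Hred$ is uniformly asymptotically stable.

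I expect the main obstacle to be exactly this last upgrade from passivity to asymptotic stability: positive-realness only furnishes the non-strict inequality $\dot V\le 0$, and without first reducing to a strictly positive-definite gain $F$ and without the observability supplied by a minimal realization of $\Hred_3$, the interconnected system may genuinely carry marginally stable modes --- invisible in the transfer function because they are uncontrollable/unobservable through the ports scaled by the vanishing $\sqrt{p_i}$, but fatal to a naive ``Hurwitz pencil'' claim. A secondary point to handle with care is that positive-realness of $\Hred_3$ is imposed only as a frequency-domain property, so the positive-real lemma must be invoked to extract $P$; if, as in the damping application, $\Hred_3$ is moreover asymptotically stable (e.g.\ obtained by positive-real balanced truncation), the argument shortens dramatically, since then $\operatorname{Re}\!\big(I+D(\pvec)\Hred_3(s)D(\pvec)\big)\succeq I\succ 0$ throughout the closed right half-plane, so $\big(I+D(\pvec)\Hred_3(s)D(\pvec)\big)^{-1}$ --- and with it $\Hred(\cdot;\pvec)$ --- is analytic there and hence has no poles in $\{\operatorname{Re}(s)\ge 0\}$.
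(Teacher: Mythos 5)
Your proof is correct, but it follows a genuinely different route from the paper. The paper stays entirely at the transfer-function level: it observes that the poles of $\Hred(s;\pvec)$ in \eqref{TFSMWreduced} are drawn from the ($\pvec$-independent) poles of $\Hred_1$, $\Hred_2$, $\Hred_4$ together with the zeros of $I+D(\pvec)\Hred_3(s)D(\pvec)$, and then argues that positive-realness of $\Hred_3$ makes $I+D(\pvec)\Hred_3(s)D(\pvec)$ strictly positive real, hence without zeros in the right half-plane, so no pole of the assembled model can lie there (pole--zero cancellations only help). You instead work in state space: you assemble an explicit block lower-triangular realization of the interconnection, reduce uniform stability to Hurwitzness of the loop matrix $\hat A_3-\hat B_3(I+F\hat D_3)^{-1}F\hat C_3$, and certify it with the positive-real (KYP) lemma, a dissipation estimate $\dot V\le -2\,y^TFy$, and LaSalle plus observability of a minimal realization of $\Hred_3$. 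The paper's argument buys brevity and matches the structural viewpoint of \eqref{TFSMWreduced}; your argument buys rigor at exactly the points the paper passes over lightly --- the handling of parameter entries $p_i=0$ (where $D(\pvec)$ is singular and ``strict'' positive realness of $I+D\Hred_3D$ degenerates), the possibility that a merely positive-real $\Hred_3$ has simple imaginary-axis poles (so the paper's ``minimum-phase, all poles and zeros in the open left half-plane'' phrasing is not literally immediate), and the well-definedness of the feedthrough inversion via $\hat D_3+\hat D_3^T\succeq 0$. Your closing remark also correctly identifies the short frequency-domain shortcut available when $\Hred_3$ is additionally asymptotically stable, which is essentially the situation the paper has in mind for the damping application.
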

\begin{proof}
It follows from the structure of the reduced transfer function
$\Hred(s,\pvec)$ in \eqref{TFSMWreduced} that  its poles are composed of the poles of $\Hred_1(s)$, $\Hred_2(s)$, and $\Hred_4(s)$, and the zeroes of $I+D(\pvec)\Hred_3(s) D(\pvec)$. Note that the poles of 
$\Hred_1(s)$, $\Hred_2(s)$, and $\Hred_4(s)$ are independent of $\pvec$ and all have negative real parts since these subsystems are asymptotically stable as they have been obtained via a model reduction algorithm to enforce this property. Therefore, these poles cannot contribute to potential instability and what is left to verify is that the zeroes of $I+D(\pvec)\Hred_3(s) D(\pvec)$ have negative real parts for every $p\in \Omega$. Recall that $\Hred_3(s)$ was
constructed with a positive-realness preserving model reduction technique. Therefore, $I+D(\pvec)\Hred_3(s) D(\pvec)$ is \emph{strictly} positive real for every $\pvec\in \Omega$ and cannot lose rank in the right-half plane, i.e., it does not have a zero in the right-half plane. Indeed,  $I+D(\pvec)\Hred_3(s) D(\pvec)$ is a minimum-phase 
system, meaning all of its poles and zeros have negative real parts.
Then, put together with the poles of $\Hred_1(s)$, $\Hred_2(s)$, and $\Hred_4(s)$, the reduced transfer function $\Hred(s,\pvec)$ is uniformly asymptotically stable. Note that there could be further pole-zero cancellations in the construction of $\Hred(s,\pvec)$. However, this will not change the conclusion since all the poles (before any potential pole-zero cancellation) already have negative real parts.
\end{proof}

\begin{remark}
One-approach to guarantee uniform asymptotic stability in the general case was proposed by Baur and Benner \cite{BauB09} where 
$\Hfull(s,\pvec)$ is sampled  at some parameter values $\pvec^i$ for $i=1,2,\ldots,n_s$ and reduced using stability preserving reduction such as \textsf{BT}. Then, the final parametric reduced system is obtained by connecting  these local reduced models via interpolation in the $\pvec$-domain. Since the interpolation in the $\pvec$-domain does not affect the poles, the resulting reduced parametric system is uniformly asymptotically stable. However, this comes at the cost of  poles being \emph{fixed}, i.e., the poles  do not vary with the parameters since the $\pvec$-dependency is completely in the $B$- or $C$-matrix. The situation  is different in our formulation where the poles still vary with $\pvec$, as desired, yet remain uniformly asymptotically stable. Moreover, the main motivation behind our approach here is to avoid the need for parameter sampling. 
\end{remark}
\begin{remark}
Theorem \ref{thm:stable} implies that in the case of the damping optimization problem (and others with similar structure), we can construct a sampling-free parametric reduced model that is uniformly asymptotically stable with an error bound.
\end{remark}

\subsubsection{Uniform asymptotic stability in the general case}
Theorem \ref{thm:stable} provided uniform asymptotic stability for the case when $\Hfull_3(s)$ is a positive real transfer function. What can we state about stability in the general case without this structure? 

We will continue to assume that all the subsystems are asymptotically stable and we apply an appropriate model reduction technique so that all the reduced subsystems are asymptotically stable as well. Following the proof of Theorem \ref{thm:stable}, we  only need to check the zeroes of $I+D(\pvec)\Hred_3(s) D(\pvec)$ and argue that these zeroes have negative real parts. 

To see the arguments more easily, let us consider the case that the parameter $\pvec$ is a scalar, $D(\pvec) =\sqrt{\pvec}$,  $U$ and $V$ are column vectors, and thus $\Hfull_3(s)$ is a single-input/single-output dynamical system. In this case, if $z_0$ is
zero of $1+D(\pvec)\Hred_3(s) D(\pvec)$, then $\Hred_3(z_0) = -1/\pvec.$
Let $\pvec_{\max}$ denote $\pvec_{\max} = \sup_{\pvec \in \Omega} \mid \pvec \mid$.
A sufficient condition for such a $z_0$ not to exist is that $\| \Hred_3 \|_\hinf < \frac{1}{\pvec_{\max}}$. Assuming that this condition holds for $\Hfull_3(s)$, which will guarantee that the full model is uniformly asymptotically stable, one can apply bounded real balancing \cite{opdenacker1988contraction,ober1991balanced}  in constructing $\Hred_3(s)$ so that it has the same $\hinf$-norm bound. The general case, i.e., when $\pvec$ is a vector, follows similarly and we list this result as a corollary.

\begin{corollary}
Consider the full parametric model with its transfer function
$\Hfull(s)$ written in terms of the subsystems as in \eqref{TFfull2} where all the subsystems $\Hfull_i(s)$ for $i=1,\ldots,4$ 
are asymptotically stable. 
Let $\pvec_{\max}$ denote $\pvec_{\max} = \sup_{\pvec \in \Omega} \| \pvec \|_\infty$ and  assume that $\Hfull_3(s)$
satisfies $\| \Hfull_3 \|_\hinf < \frac{1}{\pvec_{\max}}$. Construct the reduced subsystems
$\Hred_1(s)$, $\Hred_2(s)$, and $\Hred_4(s)$ such that they retain asymptotic stability, and   $\Hred_3(s)$ such that it retains 
the property $\| \Hred_3 \|_\hinf < \frac{1}{\pvec_{\max}}$.
Then, the reduced parametric model $\Hred(s)$  in \eqref{TFSMWreduced} is uniformly asymptotically stable for 
every $\pvec \in \Omega$.
\end{corollary}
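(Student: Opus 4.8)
The plan is to reuse the pole–zero analysis already carried out in the proof of Theorem~\ref{thm:stable} and to substitute, for the positive-realness argument used there, an elementary small-gain estimate based on the $\hinf$ bound on $\Hred_3$. Concretely, I would begin by recalling, exactly as in the proof of Theorem~\ref{thm:stable}, that the poles of the reduced parametric transfer function $\Hred(s;\pvec)$ in \eqref{TFSMWreduced} are composed of (i) the poles of $\Hred_1(s)$, $\Hred_2(s)$ and $\Hred_4(s)$, which do not depend on $\pvec$ and lie in the open left half-plane because these three subsystems were reduced so as to retain asymptotic stability, together with (ii) the zeros of $I+D(\pvec)\Hred_3(s)D(\pvec)$, i.e. the values of $s$ at which this $k\times k$ transfer function drops rank. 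Since any pole–zero cancellations that occur when the pieces are conjoined in \eqref{TFSMWreduced} can only remove poles and never introduce new ones, it suffices to prove that $I+D(\pvec)\Hred_3(s)D(\pvec)$ is nonsingular for every $s$ with $\textsf{Re}(s)\ge 0$ and every $\pvec\in\Omega$.

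To establish that nonsingularity I would argue by the small-gain principle. First, $\Hred_3(s)$ is asymptotically stable (bounded-real balancing, or any method used to preserve the $\hinf$ bound, also preserves stability), so it is analytic on the closed right half-plane and, by the maximum modulus principle, $\sup_{\textsf{Re}(s)\ge 0}\|\Hred_3(s)\|_2 = \|\Hred_3\|_\hinf$. Next comes the norm bookkeeping for the parameter matrix: because $D(\pvec)=\diag(\sqrt{p_1},\dots,\sqrt{p_k})$ is diagonal with nonnegative entries, $\|D(\pvec)\|_2^2 = \max_i p_i = \|\pvec\|_\infty \le \pvec_{\max}$. Combining these two facts, for any $\pvec\in\Omega$ and any $s$ with $\textsf{Re}(s)\ge 0$,
\[
\|D(\pvec)\Hred_3(s)D(\pvec)\|_2 \;\le\; \|D(\pvec)\|_2^2\,\|\Hred_3(s)\|_2 \;\le\; \pvec_{\max}\,\|\Hred_3\|_\hinf \;<\; 1 .
\]
Hence the spectral radius of $D(\pvec)\Hred_3(s)D(\pvec)$ is strictly less than one throughout the closed right half-plane, so $-1$ is never an eigenvalue and, by a Neumann-series argument, $I+D(\pvec)\Hred_3(s)D(\pvec)$ is invertible there. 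It therefore has no zero with $\textsf{Re}(s)\ge 0$; together with (i) this shows all poles of $\Hred(s;\pvec)$ lie in the open left half-plane for every $\pvec\in\Omega$, which is the claimed uniform asymptotic stability.

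I do not expect a substantive obstacle: given Theorem~\ref{thm:stable}, the only points that need care are (a) invoking the maximum modulus principle so that the $\hinf$ bound on $\Hred_3$, which is a supremum over the imaginary axis, controls $\|\Hred_3(s)\|_2$ on the whole closed right half-plane, and (b) the identity $\|D(\pvec)\|_2^2=\|\pvec\|_\infty$. (If one wishes to drop the nonnegativity assumption on $\pvec$, the identical computation goes through with $D(\pvec)$ replaced by $\widetilde D(\pvec)=\diag(p_1,\dots,p_k)$ and $I+D(\pvec)\Hred_3(s)D(\pvec)$ by $I+\Hred_3(s)\widetilde D(\pvec)$, using $\|\widetilde D(\pvec)\|_2=\|\pvec\|_\infty$, cf.\ \eqref{TFSMWstilenegativep}.) The hypothesis $\|\Hfull_3\|_\hinf<1/\pvec_{\max}$ on the full subsystem both guarantees that the full parametric model is uniformly asymptotically stable and makes it natural to ask that $\Hred_3$ inherit the same bound, which bounded-real balancing delivers.
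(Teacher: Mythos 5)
Your argument is correct and is essentially the paper's own: the paper reduces the question, via the proof of Theorem~\ref{thm:stable}, to ruling out right-half-plane zeros of $I+D(\pvec)\Hred_3(s)D(\pvec)$ and does so by the same small-gain condition $\|\Hred_3\|_\hinf<1/\pvec_{\max}$, sketching the scalar case and asserting the vector case "follows similarly." Your explicit bookkeeping for the vector case, $\|D(\pvec)\|_2^2=\|\pvec\|_\infty\le\pvec_{\max}$ together with the maximum modulus principle giving $\sup_{\mathsf{Re}(s)\ge 0}\|\Hred_3(s)\|_2=\|\Hred_3\|_\hinf$, just fills in the details the paper leaves implicit.
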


\subsection{The parameter mapping approach of Baur et al.\cite{baur2014mapping}}
 The parameterization given in \eqref{Apstructure} appears as a (relatively) low rank change from the base dynamic matrix, $A_0$, and it is this structural feature that we have exploited.  Another strategy that exploits this parametric structure has been proposed in \cite{baur2014mapping}.  
 There, one augments and modifies the system by introducing a set of $k$ additional synthetic inputs, $\omega(t)$, and outputs, $\eta(t)$, in such a way that the internal system parameterization is mapped  to a feedthrough term. The original system response is recovered by constraining the synthetic inputs so as to null the synthetic outputs.  The modified system can be reduced independently of the parameterization and the final parameterized reduced model is recovered by imposing an analogous constraint on the synthetic inputs; they are chosen to null the (reduced) synthetic outputs.  To illustrate, define
 \begin{align} \label{baur_pvalMap}
 \begin{split}
 E  \dot x(t) &= A_0 x  + [B\ \  UD(\pvec)]
  \begin{bmatrix} w(t)\cr \omega(t)\end{bmatrix},\\
\begin{bmatrix}\hat{y}(t)\cr \eta(t) \end{bmatrix} &= \begin{bmatrix} C \cr D(\pvec)V^T \end{bmatrix} x(t) + \begin{bmatrix} 0 &  \cr  & I \end{bmatrix}
  \begin{bmatrix} w(t)\cr \omega(t)\end{bmatrix}.
\end{split}
\end{align}
Evidently, this system has $m+k$ inputs, $\ell+k$ outputs, and the parameterization now acts on a $k$ dimensional subpace that is common to both input and output spaces. 
Notice in particular that the parameterization no longer acts directly on the state vector.    What relation does the response of \eqref{baur_pvalMap} have with that of the original system \eqref{Apstructure} ? If $\omega(t)$ is chosen so that $\eta(t)=0$
(for example, if $\omega(t)$ is assigned by state feedback as $\omega(t)=-D(\pvec)V^T x(t)$), then the remaining output $\hat{y}(t)$ matches the output of the parameterized system described in  \eqref{Apstructure}: $\hat{y}(t)=y(t;\pvec)$. Indeed, with this added  constraint imposed on the synthetic inputs, the transfer function for the resulting system is identical to what has been  defined by 
\eqref{TFfull2}. 

The dynamical system described in \eqref{baur_pvalMap} may be reduced using any strategy appropriate for linear time-invariant MIMO (multiple input/multiple output) systems. Since the parameterization has been mapped to the synthetic input/output spaces and is now external to system dynamics, model reduction strategies can be pursued without the need of any parameter sampling. The approach described in \cite{baur2014mapping} proposes a projective reduced model derived, say, as in \eqref{Redsystem}-\eqref{eq:redabce} using projection bases defined by $Z_r$ and $W_r$:
\begin{align} \label{baur_pvalMapRed}
 \begin{split}
 E_r  \dot x_r(t) &= A_{0r} x_r  + [B_r\ \  W_r^TUD(\pvec)]
  \begin{bmatrix} w(t)\cr \hat{\omega}(t)\end{bmatrix},\\
\begin{bmatrix}\hat{y}_r(t)\cr \eta_r(t) \end{bmatrix} &= \begin{bmatrix} C_r \cr D(\pvec)V^TZ_r \end{bmatrix} x_r(t) + \begin{bmatrix} 0 &  \cr  & I \end{bmatrix}
  \begin{bmatrix} w(t)\cr \hat{\omega}(t)\end{bmatrix}.
\end{split}
\end{align}
Following \cite{baur2014mapping}, we  set up a state feedback constraint to null the reduced synthetic outputs, similar to what has gone before. If $\hat{\omega}(t)$ is assigned via reduced state feedback as $\hat{\omega}(t)=-D(\pvec)V^TZ_r x_r(t)$, then $\eta_r(t)=0$, and the reduced output $\hat{y}_r(t)$ will define the output of a reduced parametric system \eqref{Apstructure}: 
$y_r(t;\pvec)=\hat{y}_r(t).$ The state-space representation of the resulting reduced model is given my 
\begin{align} 
 W_r^T E Z_r  \dot x_r(t) &= W_r^T A_{0} Z_r x_r  -
  W_r^T U D^2(\pvec)Z_r + W_r^T B  w(r)\\
\hat{y}_r(t) &=  C Z_r x_r(t) 
\end{align}
To make a clearer connection to our proposed framework, we re-write the transfer function of this 
reduced dynamical system equivalent  as in \eqref{TFSMWreduced} as
$$
\Hred(s;\pvec) = \Hred_1(s)- \Hred_2(s) D(\pvec)  (I+ D(\pvec) \Hred_3(s) D(\pvec))^{-1} D(\pvec)\Hred_4(s).
$$
In this case each system, $\Hfull_i(s)$, is reduced with the projection bases, $Z_r$ and $W_r$, for $i=1,2,3,4$; that is,
the \emph{same projection bases} are used for all four systems.

Although the approach we take here is evidently closely aligned with the approach of \cite{baur2014mapping}, an important distinction is that we are able here to reduce the individual subsystems, $\Hfull_i(s)$, independently of one another.  This allows us to better control the fidelity of the final model, and as we described in Section \ref{sec:stability}, we are able to guarantee asymptotic stability of $\Hred(s;\pvec)$ uniformly in $\pvec$, so long as each reduced model $\Hred_i(s)$, $i=1,2,3,4$ is asymptotically stable and the single reduced $\Hred_3(s)$ is also positive real. 
Naturally, these assertions may also be made with the approach of \cite{baur2014mapping} or its recent formulation for second-order systems \cite{van2019parametric}, but it can be substantially more difficult to guarantee these properties using a single choice of projecting bases, $Z_r$ and $W_r$. 
We are able to exploit the flexibility of reducing the four subsystems independently of one another and do not suffer under these constraints. 

\subsection{Numerical Examples}

We will illustrate the performance of Algorithm \ref{Method1} on two numerical examples. In both examples, subsystem reduction was performed by \textsf{BT}.
Since the  subsystems have the same $E$-term and $A$-term, 
only one (Schur) decomposition in the offline stage,
Step \ref{1st}, of Algorithm \ref{Method1} is needed.
Therefore 
Step \ref{1st} is significantly cheaper than applying \textsf{BT} to four different systems.

\begin{example} { \em  
We consider the parametric version of the Penzl model \cite{Penzl99,IonitaAntoulas14}. The full model
transfer function
$
\Hfull(s,\pvec) = C(sE-A(\pvec)^{-1}B
$
is defined by the quantities $E=I$;
\begin{align*}
A&=\diag(A(p_1), A(p_2), A(p_3),-1,-2,\ldots,-M), \\
 &\mbox{where} \quad A(p_i)=\begin{bmatrix}
-1 &p_i\\
-p_i& -1\\
\end{bmatrix},\quad \mbox{for}\quad i=1,\ldots,3;\\
 C = [c_1~c_2~\ldots~c_M]& \in   \IR^{1\times (M+6)}\quad \mbox{where}\quad c_i= \left\{ \begin{array}{ll} 10 , \quad & i=1,\ldots,6 \\
   1  , \quad & i=7,\ldots,M;
                 \end{array}\right.
                 \quad 
                 \mbox{and} \quad B=C^T.
                \end{align*}
The parameters $p_1,p_2, p_3$ represent magnitude of the imaginary parts of the two eigenvalues of the diagonal block $A (p_i)$, for $i=1,2,3$, respectively and they control the location of
the peaks in the frequency response.

This linear time invariant system can be equivalently represented in the structured form  \eqref{Apstructure} with
\begin{equation*} 
A(\pvec)=A_0-U\,\diag (p_1, p_1, p_2,p_2, p_3,p_3)V^T, 
\end{equation*}
where $A_0 =\diag(I_6,-1,-2,\ldots,-M)$;  $v_i=e_i$ for $i=1,\ldots,6$ where $e_i$ is the $i$th canonical vector;  and similarly   
\begin{align*}
  & \quad u_i= \left\{ \begin{array}{ll} -e_{i+1} , \quad & i=1,3,5, \\
  e_{i-1}    , \quad & i= 2,4,6.
                 \end{array}\right.
\end{align*}
We chose $M=100$, which implies $n=106$. This modest dimension is not necessary as \textsf{BT} can be applied in much higher dimensions. It is chosen just to illustrate the theoretical considerations.  
Based on the Hankel singular values of $\Hfull_i(s)$, for $i=1,\ldots,4$,
in Step \ref{1st} of Algorithm \ref{Method1},  we obtain reduced subsystems $\Hred_i(s)$ with dimensions  $r_1=10$, $r_2 = 1$, $r_3=6$, and $r_4=1$, respectively. This is an advantage of the proposed framework: reduction of subsystems and their reduced orders are independent of each other. 

We compare $\Hfull(s,\pvec)$ and 
$\Hred(s,\pvec)$ by inspecting them on the imaginary axis; i.e., 
$\mid \Hfull(\imath \omega,\pvec) \mid$ vs $\mid \Hred(\imath \omega,\pvec)\mid $
where $\imath = \sqrt{-1}$ and 
$\omega \in \IR$.
The top plot in Figure \ref{figure Penzlp10} shows  the quality of the approximation for the fixed parameters  $(p_1,p_2,p_3)=(10,100,5000)$ and 
illustrates  that the  reduced model approximates the full  model very accurately. The relative error in the approximation, i.e.,
$\frac{\mid \Hfull(\imath \omega,\pvec) - \Hred(\imath \omega,\pvec)\mid}{
\max_{\omega}\mid \Hfull(\imath \omega,\pvec)\mid}$
is presented in the bottom plot of  Figure \ref{figure Penzlp10}. Note that this reduced model is obtained without any parameter sampling. 
\begin{figure}[htp]
\centering
			\includegraphics[width=0.7\paperwidth]{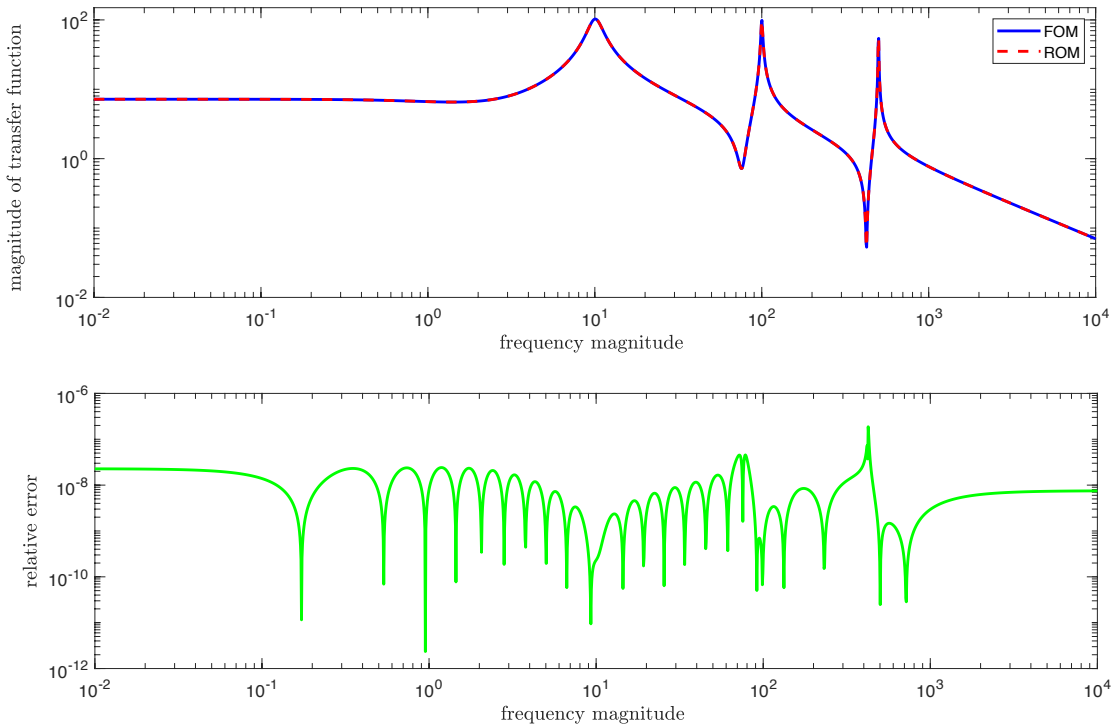}
            \caption{Transfer function plot and relative error for $(p_1,p_2,p_3)=(10,100,5000)$}\label{figure Penzlp10}
\end{figure}

\begin{figure}[htp]
\centering
			\includegraphics[width=0.7\paperwidth]{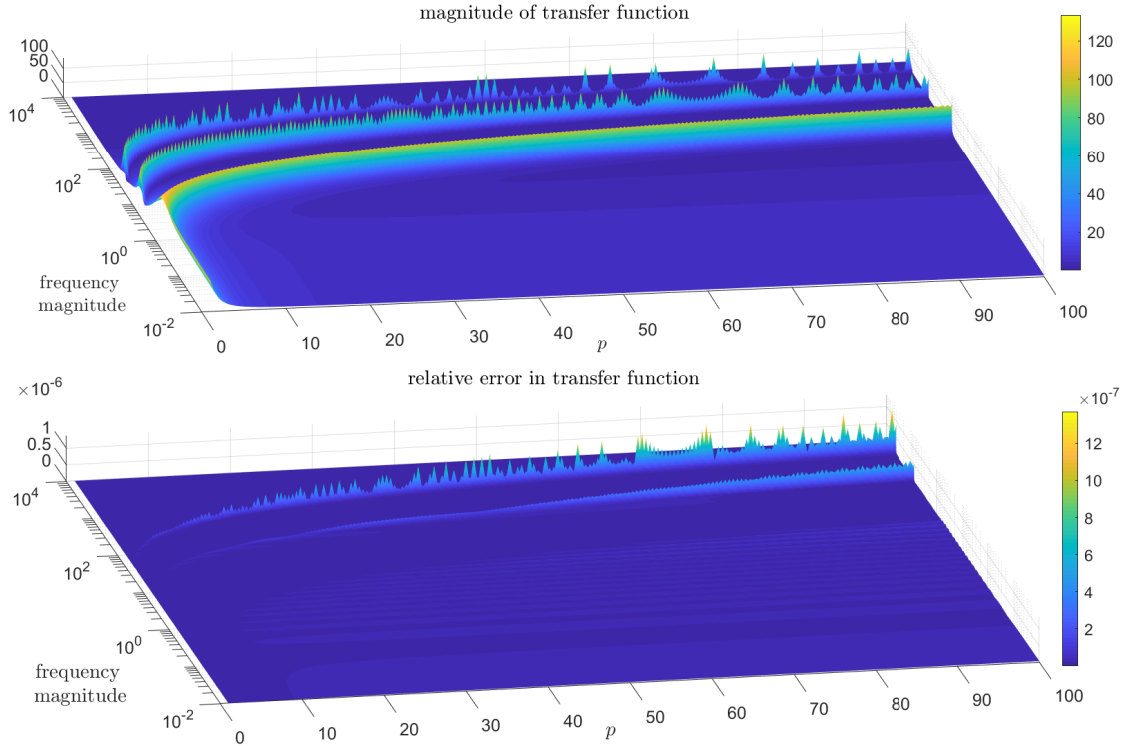}
            \caption{Magnitude of transfer function  and relative error for   parameters given by \eqref{parametersp1p2p3}}\label{figure Penzlsurf}
\end{figure}

It is not enough that $\Hred(s,\pvec)$ is accurate for one parameter set. In order to illustrate the quality of approximation and its ability to approximate the system for different  parameters,  we consider different configurations of parameters $p_1,p_2,p_3$. As mentioned earlier, the parameters control the imaginary part of the complex poles and different parameter selections will move these peaks in the frequency domain. In Figure \ref{figure Penzlsurf} we present the surface plot that illustrates
$\mid \Hfull(\imath \omega,\pvec) \mid$, as in Figure \ref{figure Penzlp10} but for different parameters $p_1,p_2,p_3$.
In order to obtain a three-dimensional surface plot, 
we choose 
\begin{equation}\label{parametersp1p2p3}
(p_1,p_2,p_3)=(\pvec,10\pvec,50\pvec), \quad \mbox{with}\quad   \pvec\in [1,100].
\end{equation}
On the top plot in Figure \ref{figure Penzlsurf} we show
$\mid \Hfull(\imath \omega,\pvec) \mid$, illustrating   how the peaks are moving with the parameter $p$. The lower subplot shows magnitude of  relative errors for  all considered parameters
and illustrates that the reduced model is accurate 
across the parameter domain \eqref{parametersp1p2p3}, with the largest relative error being less than $10^{-6}$. This accuracy is obtained by performing four non-parametric model reductions without any parameter sampling. We finally note that 
the reduced model  $\Hred(s,\pvec)$ is obtained to be asymptotically stable for every parameter sample. 
}
\end{example}  

\begin{example} { \em  In We  consider a model from the Oberwolfach Benchmark Collection representing thermal conduction in a semiconductor chip~\cite{morwiki_thermal}.
The full model is is described by
\begin{align*}
E \dot  x&=(A-p_t A_t-p_b A_b- p_s A_s) x +B u,\\
y &= C x ,
\end{align*}
where  $E \in   \IR^{4257\times 4257}$ represents the heat capacity and  $A\in \IR^{4257\times 4257}$  the heat conductivity. The matrices $A_t, A_b, A_s\in \IR^{4257\times 4257}$ are diagonal matrices resulting from the discretization of the convection boundary conditions with ranks  $111$, $99$, and $31$, respectively. The matrix  $ B\in   \IR^{1\times 4257}$ is the load vector and $C\in \IR^{7\times 4257}$ is the output matrix, while the parameters $p_t,p_b, p_s$ represent the film coefficients.
For further details for the model, we refer the reader 
to~\cite{morwiki_thermal,morFenRK04}.

We fix $p_t  = 1000$ and vary both $p_b$ and $p_s$ between $1$ and $10^9$; so here we have $\pvec=[p_b~ p_s]$. This model can be written in our structured parametric form as in \eqref{Apstructure} with $A_0=A-p_t A_t$, for the fixed $p_t=1000$, and $U$ and $V$ are matrices  with $\rank(U) = rank(V) = \rank(A_b)+\rank(A_s)=130$.
 
Based on the Hankel singular values, we reduce $\Hfull_i(s)$ to
$\Hred_i(s)$ via \textsf{BT}
using reduced orders 
$r_1=46,  r_2=66,  r_3=200,$  and $r_4=16$. The reduced order $r_3$ is bigger than the others as expected since it corresponds to approximating a dynamical system with $130$ inputs and $130$ outputs. 
In Figure \ref{errorThermalModel}, we illustrate the quality of the approximation obtained by Algorithm \ref{Method1} over the full parameter domain using the  $\htwo$-measure, i.e., 
$
\| H(\cdot,\pvec\|_{\htwo} = \sqrt{\frac{1}{2\pi}\int_{-\infty} 
     ^\infty \left\| \Hfull(\imath \omega,\pvec) \right\|_F^2\,d\omega}
$ (we will revisit system norms  in  Section \ref{ParameterOptimizatio}).
In Figure \ref{errorThermalModel},
the $x$ and $y$ axes represent, respectively,
the parameters $p_s$ and
$p_t$, and the  $z$-axis shows
the relative error 
$\frac{\|\Hfull(\cdot;  \pvec )-\Hred(\cdot; \pvec)\|_{\mathcal{H}_2}}{\|\Hred(\cdot; \pvec)\|_{\mathcal{H}_2}} $ for the reduced system  $\Hred(s,\pvec)$ calculated by Algorithm \ref{Method1}. It is clear from the figure that $\Hred(s,\pvec)$ is accurate across the full parameter domain, with relative error smaller than $2.4\times 10^{-6}$ for all parameters in $(p_s,p_b)\in [1, 10^9 ]^2$. This high-fidelity approximation is obtained  at the cost of four non-parametric subsystem model reduction without parameter sampling. As in the previous example, $\Hred(s,\pvec)$ is asymptotically stable for every parameter sample.

\begin{figure}[h]\begin{center}
 \input{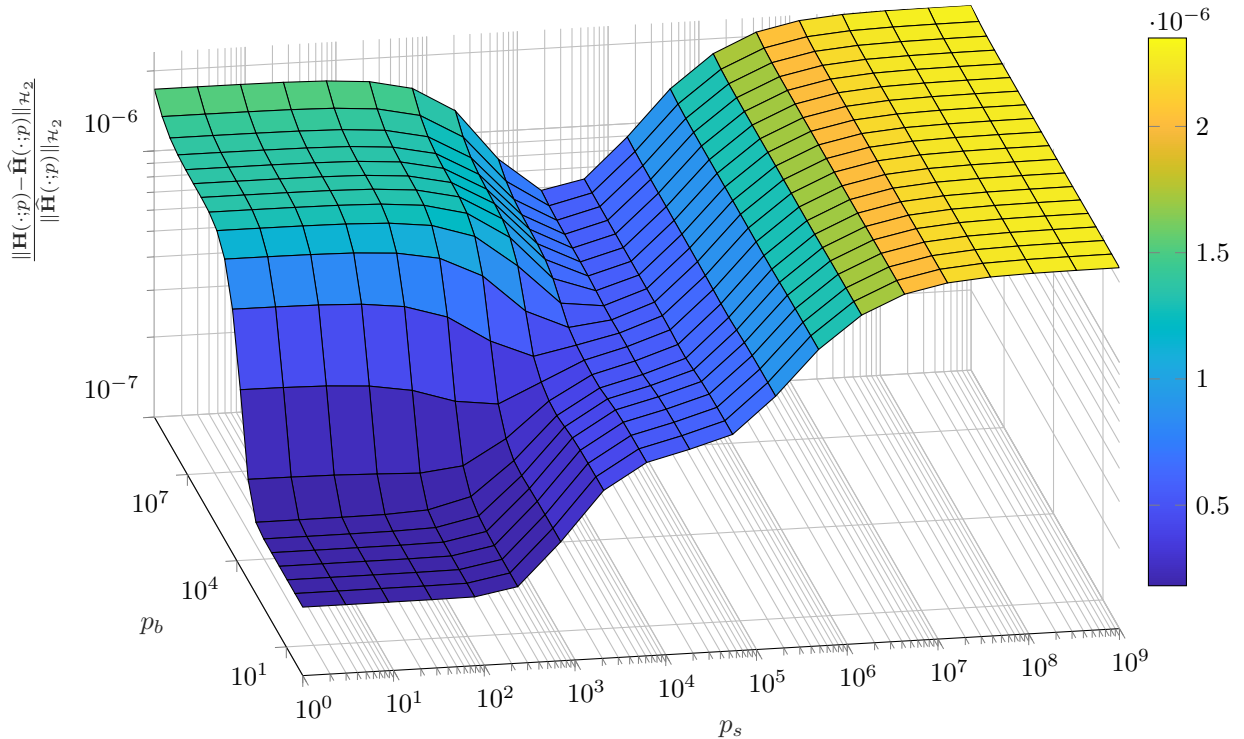}
 \caption{Relative error for different parameters for thermal model} \label{errorThermalModel}\end{center}
\end{figure}

}
\end{example}

\section{Data-driven PMOR with subsystem frequency sampling}  \label{sec:method2}

In Section \ref{sec:method1}, we proposed a sampling-free parametric model reduction approach that involved the preliminary reduction of four non-parametric models.  In this section we will present a second approach that  depends on the same four systems yet uses a data-driven framework based on transfer function (frequency-domain) samples to construct parametric reduced models in the offline stage. We will first briefly review data-driven modeling frameworks and then present the main approach.

\subsection{Data-driven modeling from transfer function samples}
 Let $\Hfull(s)$ denote the transfer function of a (nonparametric) linear dynamical system. $\Hfull(s)$ need not be a rational function of $s$ and can contain, for example, internal delays or other non-rational dependence in $s$.  Assume that we have access to samples of this  transfer function, i.e., we have $\Hfull(\xi_1),\Hfull(\xi_2),\ldots,\Hfull(\xi_N)$ where
 $\xi_i \in \IC$ for $i=1,2,\ldots,N$ are the sampling points. When obtained experimentally, these sampling points are chosen on the imaginary axis. If an analytical evaluation of  $\Hfull(s)$ is possible, they can be chosen arbitrarily as long as they do not coincide with the poles of $\Hfull(s)$. In our numerical experiments, we will work with samples on the imaginary axis but the theoretical discussion applies to the general case.
 
 Data-driven modeling in this case amounts to the following question: Given the samples $\{\Hfull(\xi_i)\}_{i=1}^N$ (without access to internal dynamics of $\Hfull(s)$, i.e., without access to a state-space transformation), construct a rational approximation $\Hred(s)$ of degree-$r$ that fits the data in an appropriate sense. There are various ways to fit this frequency domain data. One can enforce $\Hred(s)$ to interpolate the data at every sampling point using the Loewner framework \cite{mayo2007fsg,aca86},
or construct  $\Hred(s)$ to fit the data in a least-squares (LS) sense \cite{gustavsen1999rational,Drmac-Gugercin-Beattie:VF-2014-SISC,berljafa2017rkfit}, or force $\Hred(s)$ to interpolate some of the data and while minimizing the LS fit in the rest \cite{nakatsukasa2018aaa}. In this paper, we will fit data solely in a LS sense. 

Therefore, given the samples $\{\Hfull(\xi_i)\}_{i=1}^N$, our goal is to construct a degree-$r$ rational function
$\Hred(s)$, i.e., a reduced transfer function, that minimizes the LS error $\displaystyle{\sum_{i=1}^N \| \Hfull(\xi_i)-\Hred(\xi_i) \|_F^2}$.  Note that 
due to the nonlinear dependence on the poles of 
$\Hred(s)$, this is a nonlinear LS problem.
There are various approaches for solving this problem, see, e.g., 
\cite{hokanson2017projected,gonnet2011robust,gustavsen1999rational,Drmac-Gugercin-Beattie:VF-2014-SISC,nakatsukasa2018aaa,Sanathanan-Koerner-1963,berljafa2017rkfit}. Our approach  employs the Vector Fitting (\textsf{VF}) framework of \cite{gustavsen1999rational} even though one can easily adapt any of the other LS methods. We view \textsf{VF} as a tool  to be employed (as we did with \textsf{BT} and \textsf{IRKA} in subsystem reduction approach of Section \ref{sec:method1}) and therefore we do not explain it in detail. \textsf{VF} uses the  barycentric-form of $\Hred(s)$, as opposed to a state-space formulation, and  converts the nonlinear LS problem into a sequence of weighted linear LS problems each of which could be  solved easily by well-established numerical linear algebra tools in every step. The variables in each step are the coefficients of the barycentric form. Once the iteration is terminated, a state-space form is recovered. For details, we refer the reader to, for example, \cite{gustavsen1999rational,drmac2015vector,Gustavsen-2006,Chinea-G.Talocia-2011},
\cite[Chap 7]{grivet2015passive} and the references therein. 

We make a brief remark regarding  computational cost: \textsf{VF} performs
$m \cdot \ell$ QR factorizations of size
$N \times (2r)$  in every step \cite{drmac2015vector}. 
When $m$ and $\ell$ are modest, say $m,\ell < 10$, this is not a big computational effort. The cost will increase as $m$ and $\ell$ grow; however there are various ways to speed up the process such as performing the $m \cdot \ell$ QR factorizations in parallel \cite{Gustavsen-2006,Chinea-G.Talocia-2011} as they are independent of each other. We have not needed  any such sophisticated tools and a basic implementation proved efficient for us. Assume that 
the underlying system is a rational function itself; i.e., $\Hfull(s) = C(sE-A)^{-1}B$. Then, obtaining the samples $\{\Hfull(\xi_i)\}_{i=1}^N$ require solving $N$ linear systems of size $n \times n$ with multiple right-hand size; a much larger cost compared to \textsf{VF} itself. Therefore, the main cost of \textsf{VF} is indeed the sampling step itself.

\subsection{\textsf{pMOR} from offline samples}

Now, we discuss how we integrate the subsystem structure revealed in Proposition \ref{swmonH}
and \textsf{VF} for the parametric problems with the structured transfer function $\Hfull(s,\pvec)$.  As we briefly discussed above, 
the main cost in \textsf{VF} comes from computing the transfer function samples at selected frequencies. Therefore, we want to avoid re-sampling  $\Hfull(s,\pvec)$ from scratch for every given $\pvec$.

Recall \eqref{TFSMWstile}, which we repeat below
$$
\Hfull(s;\pvec)=\Hfull_1(s)- \Hfull_2(s) D(\pvec)  \left[I+ D(\pvec) \Hfull_3(s) D(\pvec)\right]^{-1} D(\pvec) \Hfull_4(s).
$$
Given the predetermined points  $\xi_1,\ldots, \xi_N$ in the complex plane,
compute the samples
$$\Hfull_1(\xi_i ) ,\, \Hfull_2(\xi_i) ,\,  \Hfull_3(\xi_i) ,\,  \Hfull_4(\xi_i) \quad \mbox{for}\quad  i=1,\ldots, N.$$
 It is important to note that these values do not depend on parameter $\pvec$, which means that we can perform this
computation once in the off-line stage. Furthermore, all four subsystem transfer functions share the same resolvent 
$(sE- A_0)^{-1}$; therefore in evaluating  $\Hfull_j(\xi_i )$ for $j=1,\ldots,n$, one takes advantage of this fact, significantly reducing the numerical cost of this step.

Then, for a parameter $\pvec$, using \eqref{TFSMWstile} we can efficiently calculate the values $\Hfull(\xi_i;\pvec)$  as 
\begin{align}\label{TFSMWstile xi}
\Hfull(\xi_i;\pvec)&=\Hfull_1(\xi_i)- \Hfull_2(\xi_i) D(\pvec)  (I+ D(\pvec) \Hfull_3(\xi_i) D(\pvec))^{-1} D(\pvec) \Hfull_4(\xi_i),
\end{align} 
for $i=1,\ldots,N$. This is step comes essentially at no cost. Therefore, we can re-sample 
$\Hfull(\xi_i;\pvec)$ for any $\pvec$ with almost no effort. Then, 
a data-driven approach, such as \textsf{VF} can be employed to construct a reduced model at a desired parameter value.  This is summarized in Algorithm \ref{Method2}.

\begin{algorithm}[htp] 
 \caption{Parametric reduced order model based on \textsf{VF}}   
\label{Method2}                                            
\begin{algorithmic} [1]                                        

\STATE  \label{1stII} \textsf{Off-line Stage:} For the predetermined points in the complex plane  $\xi_1,\ldots, \xi_N$ calculate
  $$\Hfull_1(\xi_i ) ,\, \Hfull_2(\xi_i) ,\,  \Hfull_3(\xi_i) ,\,  \Hfull_4(\xi_i) \quad \mbox{for}\quad  i=1,\ldots, N,$$
  using \eqref{H1-H4}.
\STATE \label{2ndII}  \textsf{On-line Stage:}\\ \qquad  For any given parameter $\pvec$ calculate $\Hfull(\xi_i;\pvec)$ for $i=1,\ldots, N$ using   formula \eqref{TFSMWstile xi}.
\STATE \qquad\label{3rdII}  Based on  $\Hfull(\xi_1;\pvec),\ldots, \Hfull(\xi_N;\pvec)$ obtain reduced system $\Hred(s;\pvec)$   using \textsf{VF}.
\end{algorithmic}
\end{algorithm}
For determining  the quality of the approximation resulting from Algorithm 
\ref{Method2}, we will use discrete LS error. That is, we will use the error measure calculated by
\begin{align}
\label{relativeerror}
 e(\Hfull(\cdot; \pvec) ),\Hred(\cdot;\pvec)))={\sum_{i=1}^{N}\left\|\Hfull(\xi_i;\pvec)-\Hred(\xi_i;\pvec) \right\|_F^2}.
\end{align}

Similar to Algorithm \ref{Method1}, sampling-based 
Algorithm \ref{Method2}
is well suited for computationally efficient parameter optimization and studying important system properties. 
In Algorithm \ref{Method2}, Step \ref{1stII} is executed only once in the off-line stage. Then, each time the parameter $\pvec$ is varied, (in the on-line stage), steps \ref{2ndII}-\ref{3rdII}  can be executed efficiently. 
In  the next section, we will present how one can use 
Algorithms \ref{Method1} and 
\ref{Method2}, and 
the error estimates given by \eqref{upperbound} and \eqref{relativeerror}, to   ensure  robust and accurate parameter optimization.  
\section{Parameter optimization for systems with low-rank parameterization}
\label{ParameterOptimizatio}

In this section we will present   algorithms for parameter optimization problems  that inherit the dynamical structure in \eqref{Apstructure}.  
We will incorporate the proposed sampling-free parametric model reduction techniques of Sections \ref{sec:method1} and \ref{sec:method2} into these optimization problems for efficient surrogate optimization.

Parameter optimization plays a vital role in many applications. In the case of damping optimization setting, this is computationally demanding problem even for moderate dimensions. The main reason lies in the fact that we need to optimize damping parameters (viscosities) together with damping positions that is a demanding combinatorial optimization problem. Optimization of damping parameters for the case of criteria based on system norms was studied, e.g., in \cite{Blanchini12, BennerKuerTomljTruh15, NTT19, TomljBeattieGugercin18}.  In this section we will present algorithms that allow parameter optimization in structured systems and in the section with numerical experiments we will apply these algorithms for efficient optimization of damping parameters. For
usage of model reduction in optimization in more general settings, we refer the reader to, e.g., 
\cite{BenSV14,Arian2002,yue2013,Antil2011,Kunisch2008,BuiThanh2008,alla2019certified,heinkenschloss2018reduced} and the references therein.

\subsection{The choice of cost function in damping optimization} 
Consider the following ODE-constrained optimization problem:
\begin{equation} \label{optprob1}
\begin{split}
   \pvec^{\star} &= \arg\min_{\pvec \in \Omega} \left\| y(\cdot,\pvec) \right\|
   \\[2ex]
\mbox{subject~to}\quad E  \dot x(t;\pvec) &= A(\pvec)x(t;\pvec) + Bu(t),\\
y(t;\pvec) &= Cx(t;\pvec).
\end{split}
\end{equation}
There are many viable choices for the norm selection $\|y(\cdot,\pvec)\|$ and 
the algorithms we describe below will apply to these various scenarios. However,
with the damping optimization problem of Section \ref{sec:motivexample} in mind
we will choose a specific norm discussed  below.

Recall that  in the damping optimization setting, the input $w$ represents an input disturbance and the goal is to minimize the influence of $w$  on the output $y$. Therefore, one might choose to minimize
$\| y(\cdot,\pvec) \|_{L_\infty} \coloneqq \sup_{t \geq 0} \| y(\cdot,\pvec) \|_{\infty} $
or 
$\| y(\cdot,\pvec) \|_{L_2} \coloneqq \sqrt{\int_0^\infty \| y(\cdot,\pvec) \|_{2}^2 dt}.$
These norms can be equivalently represented using the transfer function $\Hfull(s,p)$. The corresponding frequency-domain norms are the $\htwo$ and $\hinf$ norms: 
\begin{equation}
    \left\| \Hfull(\cdot,\pvec) \right\|_{\htwo}
     \coloneqq 
     \sqrt{\frac{1}{2\pi}\int_{-\infty} 
     ^\infty \left\| \Hfull(\imath \omega,\pvec) \right\|_F^2\,d\omega} 
    \quad \mbox{and} \quad
     \left\| \Hfull(\cdot,\pvec) \right\|_{\hinf}
    \coloneqq \sup_{\omega \in \IR} 
 \left\| \Hfull(\imath \omega,\pvec) \right\|_2,
\end{equation}
where $\imath^2 = -1$ and $\| \cdot \|_F$ denotes the Frobenius norm. For a stable linear
(parametric) dynamical systems with an input $w(t)$ having 
$\| w  \|_{L_2}\leq \infty$  and the corresponding output $y(t;\pvec)$, it holds
$$
\| y(\cdot,\pvec) \|_{L_\infty}
\leq   \left\| \Hfull(\cdot,\pvec) \right\|_{\htwo} \| w  \|_{L_2}
\quad \mbox{and}\quad 
\| y(\cdot,\pvec) \|_{L_2}
\leq   \left\| \Hfull(\cdot,\pvec) \right\|_{\hinf} \| w  \|_{L_2}.
$$
Therefore the optimization problem \eqref{optprob1}, with the choice of the $L_\infty$ norm,  can be equivalently rewritten as 
\begin{equation}
\label{ouroptprob}
    \pvec^{\star} = \arg\min_{\pvec \in \Omega} \left\| \Hfull(\cdot,\pvec) \right\|_{\htwo}
    ~~~\mbox{where}~~~
    \Hfull(s,\pvec) = C \left(s E - \left( A_0-U\,D^2(\pvec)V^T\right)\right)^{-1}B.
\end{equation}
In the discussion below, we will present the analysis and algorithms for the parameter optimization problem \eqref{ouroptprob}.

\subsection{Surrogate optimization with reduced parametric  models} 
The major cost in \eqref{ouroptprob} is the computation of  the $\htwo$ norm. For 
$ \Hfull(s;\pvec) = C(sE-A(p))^{-1}B$, the $\htwo$ norm is computed by solving a Lyapunov equation:
$$
\left\| \Hfull(\cdot,\pvec) \right\|_{\htwo} = \sqrt{ 
\textsf{trace}(C P C^T)} \quad \mbox{where~$P$~solves}\quad
 A(\pvec) P E^T + E P A(\pvec)^T + B B^T = 0.
$$
Solving a large-scale Lyapunov equation is computationally demanding and in this optimization setting one has to repeat this task for many different $\pvec$ values. We will use the parametric reduced models from Algorithms \ref{Method1} and \ref{Method2} to relieve this computational burden. Therefore, as opposed to
\eqref{ouroptprob}, we will solve the surrogate optimization problem
\begin{equation}
\label{suroptprob}
    \hat{\pvec}^{\star} = \arg\min_{\pvec \in \Omega} \left\| \Hred(\cdot,\pvec) \right\|_{\htwo},
\end{equation}
where the reduced parametric transfer function $\Hred(\cdot,\pvec)$
will be constructed as in   either Algorithm \ref{Method1}
or Algorithm \ref{Method2}, without need for parameter sampling.

Assume $\pvec^\star$ is the minimizer of \eqref{ouroptprob} and note that 
\begin{equation} \label{optupperbound}
\left \| \Hfull (\cdot,\pvec^\star) \right \|_\htwo \leq
\left\| \Hfull(\cdot,\pvec^\star) - 
\Hred (\cdot,\pvec^\star) \right\|_\htwo
+ \left\| \Hred (\cdot,\pvec^\star) \right\|_\htwo.
\end{equation}
The surrogate optimization problem \eqref{suroptprob} will minimize the second term in \eqref{optupperbound}. Therefore, we need to verify that the first term in \eqref{optupperbound}
is small enough. In other words, we need the reduced model $\Hred(s,\pvec)$ to be an  accurate approximation at the minimizer $\hat{\pvec}^\star$.

\subsubsection{Surrogate optimization with reduced model  via Algorithm \ref{Method1}} 
\label{suroptmethod1}
To guarantee that $\Hred(s,\pvec)$ is accurate enough at the optimizer $\hat{\pvec}^\star$, we need to be evaluate the term
$\left\| \Hfull(\cdot,\hat{\pvec}^\star) - 
\Hred (\cdot,\hat{\pvec}^\star) \right\|_\htwo$. Therefore, an efficient evaluation (estimation) of this term 
during optimization is 
 crucial for a numerically effective implementation. 
 
When Algorithm \ref{Method1} is employed to construct the reduced model $\Hred(s,p)$,
Theorem \ref{upper bound}, more specifically \eqref{upperbound}, 
shows how $\left\| \Hfull(\cdot,\hat{\pvec}^\star) - 
\Hred (\cdot,\hat{\pvec}^\star) \right\|_\htwo$  can be
bounded using the subsystem errors 
 $\epsilon_i = \|\Hfull_i(\cdot) - \Hred_i(\cdot)\|$, for $i=1,2,3,4$. Unfortunately, two of the terms in
 \eqref{upperbound} depend on the full order quantities $\Hfull_1(s)$
 and $\Hfull_2(s)$. Therefore, in our surrogate optimization routine, we will use an approximation to this upper bound. Assuming $\Hred_2(s)$ and $\Hred_3(s)$  are accurate approximations to  $\Hfull_2(s)$ and $\Hfull_3(s)$,
 i.e., $\Hfull_2  \approx\Hred_2 $, and $  \Hfull_3  \approx\Hred_3 $(note that we can control this accuracy in the model reduction stage), 
 using  Theorem \ref{upper bound}, we will approximate the upper bound as 
\begin{align} 
 \|\Hfull(\cdot; \pvec) \nonumber )-\Hred(\cdot;\pvec))\| \lesssim   \epsilon_1 & +\epsilon_2  f_1(\pvec,\Hred_3,\Hred_4) \\ 
 & + \epsilon_3 f_1(\pvec,\Hred_3,\Hred_4) f_2(\pvec,\Hred_2 ,\Hred_3   )+ \epsilon_4 f_2(\pvec,\Hred_2 ,\Hred_3   ),\label{upperboundf_i}
\end{align}
 where functions $f_1$ and $f_2$ and given by \eqref{DefFunf1} and \eqref{DefFunf2}, respectively,
 and $\epsilon_i = \|\Hfull_i(\cdot) - \Hred_i(\cdot)\|$
 for $i=1,\ldots,4.$ To simplify the notation, we will denote the upper bound estimate, i.e., the right hand-side of \eqref{upperboundf_i}, with $f(\pvec)$.

Now, using $f(\pvec)$,  we can efficiently estimate the accuracy of the 
reduced model at a given parameter value $\pvec$.  In Algorithm 
\ref{parameteroptimizationwithmethod1},
we give an outline of a surrogate optimization method using this estimate. 
Starting with initial reduced subsystems in  Step \ref{comproms} (constructed for a given accuracy),  Algorithm 
\ref{parameteroptimizationwithmethod1} solves the surrogate optimization problem  in Step \ref{suropt1inalg}.
Then, Step \ref{whilestep} checks whether the reduced model is accurate enough at the current optimizer using the estimate  $f(p)$ for the upper bound. If it is, the algorithm terminates. Otherwise, Step 
\ref{increaseorder} adaptively increases the reduced dimension and Step \ref{optp} resolves the surrogate optimization for the updated reduced model. This procedure is repeated until desired tolerance is met.

 \begin{algorithm}[htp] 
 \caption{Surrogate parameter optimization using reduced models via Algorithm 
 \ref{Method1}}   
\label{parameteroptimizationwithmethod1}                                            
\begin{algorithmic} [1]                                        
\REQUIRE  System matrices $A(\pvec),E,B,C$ defining
\eqref{Mainsystem};\\
initial point $\pvec^0$ for optimization routine\\
tolerance $0<\tau\ll 1$ for error bound\\
tolerance $0<\nu\ll 1$ for optimization routine\\
starting reduced dimensions $r_1,r_2,r_3,r_4$ and the corresponding subsystems
$\Hred_1$, $\Hred_2$, $\Hred_3$, and $\Hred_4$. 
 \ENSURE  approximation of optimal parameters
\STATE \label{comproms}   Choose the reduced orders $r_1,r_2,r_3$, $r_4$ (and thus the reduced subsystems $\Hred_1(s)$,
$\Hred_2(s)$, $\Hred_3(s)$, and $\Hred_4(s)$ via Algorithm \ref{Method1}) so that  $f(\pvec^0)< \tau$.
\STATE \label{suropt1inalg} Solve the surrogate optimization problem 
$$\hat{\pvec}^\star = \arg\min_{\pvec} \left\| \Hred(\cdot,\pvec) \right\|_{\mathcal{H}_2}  $$
with the initial guess $\pvec^{0}$ and 
tolerance $\nu$. 
\WHILE 
{minimizer $\pvec^\star$ such that $f(\pvec^\star)>\tau$} 
\label{whilestep}
\STATE  $\pvec^0=\hat{\pvec}^\star$
\STATE \label{increaseorder}
Increase the reduced orders $r_1,r_2,r_3$, $r_4$ (and thus the reduced subsystems  via Algorithm \ref{Method1}) so that  $f(\hat{\pvec}^\star)< \tau$.
\STATE \label{optp} Determine the new minimizer 
by solving the surrogate optimization problem 
$$\hat{\pvec}^\star = \arg\min_{\pvec} \left\| \Hred(\cdot,\pvec) \right\|_{\mathcal{H}_2}  $$
using the updated $\Hred$, the initial guess $\pvec^{0}$, and tolerance $\nu$. 
\ENDWHILE
\end{algorithmic}
\end{algorithm}

There are various algorithmic details 
that will  help speed up the computations. We will not dive into those details; instead highlight some points. For example, assume that subsystem model reduction in Algorithm \ref{Method1} is performed using \textsf{BT}. Then, to increase the reduced dimensions in Step \ref{increaseorder}, one does not need to apply  model reduction  from scratch. In the case of \textsf{BT}, one will only need to add more vectors to the \textsf{BT}-based model reduction bases from already computed quantities. If \textsf{IRKA} is employed in Algorithm \ref{Method1}, then the current reduced-model poles, appended with some others, will 
be an  effective initialization strategy for \textsf{IRKA}, yielding faster convergence.

\subsubsection{Surrogate optimization with reduced model  via Algorithm \ref{Method2}} 

We now focus on solving the optimization problem \eqref{ouroptprob} using 
reduced models  from the data-driven Algorithm \ref{Method2}. One major difference from 
Algorithm \ref{Method1} is that in this case there are no reduced subsystems. Instead, for every given $\pvec$, we have a numerically efficient way to find 
an accurate approximation $\Hred(s,\pvec)$ to
$\Hfull(s,\pvec)$. Therefore, the subsystem-based  error estimate in \eqref{upperboundf_i} does not apply here. However, we have the sample-based (discretized) version
$e(\pvec)$ defined in 
 \eqref{relativeerror}. For example, if \textsf{VF} is employed in Algorithm \ref{Method2}, the error $e(\pvec)$ will be automatically calculated during the construction of $\Hred(s,\pvec)$ and thus no additional effort is needed  to compute $e(\pvec)$. Therefore, following similar arguments to those found in Section \ref{suroptmethod1},
 in solving the surrogate optimization problem \eqref{suroptprob}, 
 we need to ensure that the reduced model $\Hred(s,\pvec)$ is an accurate approximation to $\Hfull(s,\pvec)$ at the optimizer $\pvec = \hat{\pvec}^\star$ where accuracy is now measured using $e(p)$.
 
 The resulting approach is briefly discussed in Algorithm 
 \ref{parameteroptimizationwithmethod2}. The fundamental structure is almost identical to that of Algorithm \ref{parameteroptimizationwithmethod1}. We test whether $e(\hat{\pvec}^\star)$ is below a  prespecified tolerance. If not, we increase the order of the reduced model in Algorithm 
 \ref{Method2} until we reach the desired accuracy.

\begin{algorithm}[htp] 
 \caption{Surrogate parameter optimization using reduced models via Algorithm 
 \ref{Method2}}   
\label{parameteroptimizationwithmethod2}                                            
\begin{algorithmic} [1]                                        
\REQUIRE  System matrices $A(\pvec),E,B,C$ defining
\eqref{Mainsystem};\\
initial point $\pvec^0$ for optimization routine\\
tolerance $0<\tau\ll 1$ for error bound\\
tolerance $0<\nu\ll 1$ for optimization routine\\
Samples $\{\Hfull_i(\xi_i)\}_{i=1}^N$ at predetermined points in the complex plane  $\xi_1,\ldots, \xi_N$.
 \ENSURE  approximation of optimal parameters
\STATE {Choose the reduced order  in Algorithm \ref{Method2} so that $e(\pvec^0) < \tau$.}
\STATE \label{suropt1inalg2} Solve the surrogate optimization problem 
$$\hat{\pvec}^\star = \arg\min_{\pvec} \left\| \Hred(\cdot,\pvec) \right\|_{\mathcal{H}_2}  $$
with the initial guess $\pvec^{0}$ and 
tolerance $\nu$ with $\Hred_{\pvec}$ computed via
Algorithm \ref{Method2} using samples
$\{\Hfull_i(\xi_i)\}_{i=1}^N$.
\WHILE 
{minimizer $\pvec^\star$ such that $e(\pvec^\star)>\tau$} 
\label{whilestep2}
\STATE  $\pvec^0=\hat{\pvec}^\star$
\STATE \label{increaseorder2}
Increase the reduced order used in Algorithm \ref{Method2} so that $e(\pvec^\star) < \tau$.
\STATE \label{optp2} Determine the new minimizer 
by solving the surrogate optimization problem 
$$\hat{\pvec}^\star = \arg\min_{\pvec} \left\| \Hred(\cdot,\pvec) \right\|_{\mathcal{H}_2}  $$
using the updated $\Hred$, the initial guess $\pvec^{0}$, and tolerance $\nu$.
\ENDWHILE
\end{algorithmic}
\end{algorithm}

As is the case with Algorithm 
\ref{parameteroptimizationwithmethod1},
there are various numerical aspects that one could exploit to make the online computations faster. For example, one main factor determining the convergence speed of \textsf{VF} is an initial selection of poles. In the damping optimization problem,  the poles from the critical damping case, are perfect candidates. Also, if one has to increase the order in Step \ref{increaseorder2} of Algorithm \ref{parameteroptimizationwithmethod2},
the already-converged poles  from the previous optimization step (appended with a small number of additional ones) is certainly expected to speed up convergence. We will elaborate on these points in the numerical example below.

\subsection{Numerical Example}

We revisit the damping optimization problem described in Section \ref{sec:motivexample} and focus on optimizing the viscosities for different sets of damping positions. We consider an  $n$-mass oscillator with $n=2d+1$ masses and $2d+3$ springs as shown in Figure \ref{FigOscillator}. This oscillator has  two rows of $d$ masses connected with springs. The leading masses in each row on the left edge   are connected to  a fixed boundary while  on the opposite (right) edge the masses ($m_d$ and $m_{2d}$)  are connected to a single  mass $m_{2d+1}$, which, in turn, is connected to a fixed boundary. See \cite{BennerKuerTomljTruh15, TomljBeattieGugercin18} for further details.
\begin{figure}[h]
\begin{center}
 \includegraphics[width=9cm]{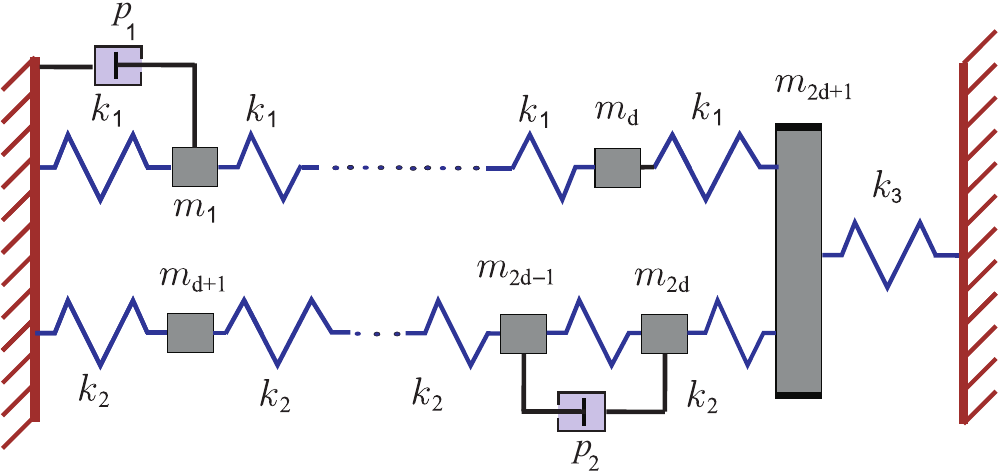}
 \end{center}
\label{FigOscillator}
\end{figure}

The state-space model is given by \eqref{MDK} where the   stiffness matrix is given by
\begin{align*}
K =\begin{bmatrix}
    K_{11} &   &  -\kappa_1\\[2mm]
      & K_{22} &  -\kappa_2\\[2mm]
     -\kappa_1^T & -\kappa_2^T  &  k_1+k_2+k_3\\
 \end{bmatrix},\, \,
 K_{ii} =k_i
 \begin{bmatrix}
    2 & -1  & & &\\
     -1 & 2 &-1 & & \\
      &     \ddots & \ddots  & \ddots&\\
      &    & -1 & 2 & -1 \\
       & & &-1 & 2\\
 \end{bmatrix},
\end{align*}
with $\kappa_i=\begin{bmatrix}0&
 \ldots&
 0&
 k_i
 \end{bmatrix} $ for $i=1$ and $i=2$, and the mass matrix is $M  =\diag{(m_1,m_2,\ldots,m_n)} $.

 We pick $n=1801$ masses ($d=900$)  with the  values
\begin{align*}
  & \quad m_i= \left\{ \begin{array}{ll} 1000-  \frac{i}{2}, \quad & i=1,\ldots,450, \\
   i+325  , \quad & i=451,\ldots,900, \\
  1300  -  \frac{i}{4}  , \quad & i=901,\ldots,n.
                 \end{array}\right.
\end{align*}
The  stiffness values are chosen as
$  k_1=500, k_2=200$, and $k_3=300$. 
The parameter $\alpha_c$ that determines the influence of the internal damping defined by  \eqref{C_{int}} is set to $ 0.02$. The primary excitation corresponds to five masses closest to ground, i.e., $B_2\in \IR^{n\times 5}$ with
 \begin{align*}
  B_2(1:2,1:2) &=\diag(20,10),\\
    B_2(901:902,3:4) &=\diag(20,10),\\
    B_2(1801,5)&=30;
 \end{align*}
and all other entries being zero.
We are interested in  
the two displacements, yielding the output 
 $$ y(t;p)=\begin{bmatrix} q_{400}(t;p)& q_{1300}(t;p) \end{bmatrix}^T.$$

In this example we consider 
optimization over four dampers with gains $p_1, p_2, p_3$ and $p_4$ with their positions encoded in
\begin{equation*}
 U_2  =\left [\,e_{j_1}-e_{j_1+10}, ~~ e_{j_2}, ~~ e_{j_3},  ~~ e_{j_3}-e_{j_3+100} \,\right],
\end{equation*}
where $e_i$ is the $i$th canonical vector and  the indices $j_1, j_2, j_3$ determine the damping positions.
In order to illustrate the performance of  our surrogate optimization framework for different damping configurations,  the following indices   are considered:   $j_1 \in \{100,300,500,700\}$,  $j_2\in\{ 150,   350,   550,  750\}$, and $j_3\in\{ 1400, 1700\}$. This results in $32$ different damping configurations for which we optimize   $\mathcal{H}_2$ system norm, i.e., we solve \eqref{ouroptprob} and the surrogate problem 
\eqref{suroptprob}.
 
The full optimization problem
\eqref{ouroptprob} and the surrogate problem \eqref{suroptprob}
were solved using   \textsc{Matlab}'s built-in \verb"fminsearch" together with a  transformation that allows  constrained optimization. The starting point was $\pvec^0=(100,100, 100, 100)$ and for each parameter, the range was $[0,5000]$. The stopping tolerance for optimization was set to $\nu=5\cdot 10^{-4}$. In solving the surrogate optimization problem, we employed both Algorithms \ref{parameteroptimizationwithmethod1} and 
\ref{parameteroptimizationwithmethod2}.
Our implementation will take advantage of the fact that the first subsystem $\Hfull_1(s)$ is 
independent of not only the parameter $\pvec$ but also the damping positions. Therefore, for   the $32$ damping configurations considered, reducing   
$\Hfull_1(s)$ in Algorithm \ref{parameteroptimizationwithmethod1} (or sampling of $\Hfull_1(s)$ in Algorithm \ref{parameteroptimizationwithmethod2} needs to be done only once.

In  Algorithm \ref{parameteroptimizationwithmethod1} for each damping configuration, we used
\begin{align*}
 \quad \mbox{ termination tolerance for error bound:\quad} \tau =10^{-2};   \\
 \mbox{initial reduction dimensions:\quad} (r_1,r_2,r_3,r_4)=(280, 300, 480, 430).
 \end{align*}
 The reduced orders $r_i$ were chosen based on the Hankel singular values of each subsystem. 
Reduced subsystem updates were performed such that each time an update was needed,  $r_i$ was increased by 15\%. 
Similarly, in  Algorithm 
\ref{parameteroptimizationwithmethod2} for each damping configuration, we used:
\begin{align*}
 \quad \mbox{ termination tolerance for error bound\quad} \tau =10^{-4};   \\
 \mbox{initial reduced-order was set to\quad} 130;\\
 \mbox{number of  predetermined sampling points  $\xi_1,\ldots, \xi_N$  was set to\quad} N=500. 
 \end{align*}
The sampling points $\xi_1,\ldots, \xi_N$ were chosen to be logarithmically spaced between the smallest and largest (by magnitude) undamped eigenfrequencies. 
We initialized \textsf{VF} using 
dominant poles \cite{BennerKuerTomljTruh15}. During the optimization process each time that  $ e(\Hfull(\cdot; \pvec) ),\Hred(\cdot;\pvec)))>\tau$, the order  was increased by 10\%. 

 Across all damping configurations,  Algorithms \ref{parameteroptimizationwithmethod1} entered  the inner while loop 
only in 15\% of the cases
and Algorithm \ref{parameteroptimizationwithmethod2} in 34\% of the cases. For Algorithm \ref{parameteroptimizationwithmethod1}, this means that model reduction was performed only once for most configurations.
For Algorithm \ref{parameteroptimizationwithmethod2}, note that it employs Algorithm \ref{Method2}, which resamples $\Hfull(s,\pvec)$ almost at no cost, and then applies \textsf{VF}.  Repeated application of \textsf{VF} constitutes only a modest cost increment since the required frequency sampling is obtained already at an earlier step.
  Remarkably, we have also observed that in nearly all cases that we consider, \textsf{VF} converges quickly. In particular, across all $32$ damping configurations, application of \textsf{VF} for the initial surrogate optimization step took on average 13 iterations to converge. But in the vast majority of subsequent \textsf{VF} application, convergence occurred often after a single iteration (93.5\% of total \textsf{VF} applications) or two iterations (3.3\% of total \textsf{VF} applications).

Figure \ref{RelErrGains} depicts the relative errors in the optimal gains for different damping configurations calculated by Algorithm \ref{parameteroptimizationwithmethod1} 
(denoted by blue squares)
and Algorithm \ref{parameteroptimizationwithmethod2}
(denoted by black triangles).
The relative errors in the optimal gain is calculated by $\|\pvec^\star-\hat{\pvec}^\star\|/\|\pvec^\star\|$, where $\pvec^\star$ and
$\hat{\pvec}^\star$ denote the optimal gains calculated with, respectively, the full model 
(i.e., solving \eqref{ouroptprob}) and 
the reduced model (i.e., solving \eqref{suroptprob}). 
The figure shows in most cases Algorithm \ref{parameteroptimizationwithmethod2} 
gave more accurate results.

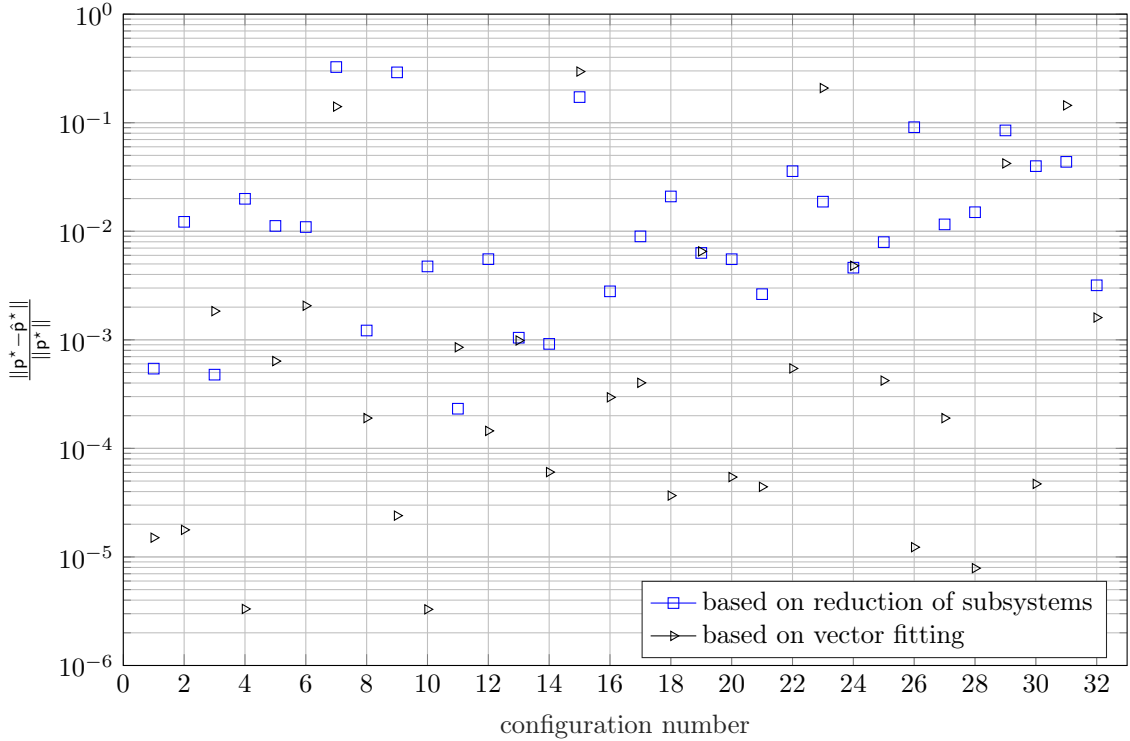
\begin{figure}[h]\begin{center}
%
%
\begin{tikzpicture}

\begin{axis}[%
width=5.2in,
height=3.4in,
at={(0.758in,0.481in)},
scale only axis,
xmin=0,
xmax=33,
xlabel style={font=\color{white!15!black}},
xlabel={configuration number},
ylabel={$\frac{\|\pvec^{\star}-\hat{\pvec}^\star\|}{\|\pvec^{\star}\|}$},
ymode=log,
ymin=1e-06,
ymax=1,
yminorticks=true,
axis background/.style={fill=white},
xmajorgrids,
ymajorgrids,
yminorgrids,
legend style={at={(0.98,0.13)}, legend cell align=left, align=left, draw=white!15!black}
]
\addplot [color=blue, draw=none, mark=square, mark options={solid, blue}]
  table[row sep=crcr]{%
1	0.000542720652483514\\
2	0.0122146171072611\\
3	0.000477600761666868\\
4	0.0199074741507212\\
5	0.0111981451441596\\
6	0.0109384230262138\\
7	0.325644291026748\\
8	0.00121842428255113\\
9	0.290865223373094\\
10	0.00474379801165942\\
11	0.000231868989147354\\
12	0.00553501887390266\\
13	0.00104575334429056\\
14	0.000917171946058492\\
15	0.172207997605939\\
16	0.00279661365873561\\
17	0.00897325546759503\\
18	0.0209244372064201\\
19	0.00631430898302534\\
20	0.0055300515551673\\
21	0.00263409166444679\\
22	0.0357800315084178\\
23	0.0187379620064519\\
24	0.0046189801283156\\
25	0.00794364609560898\\
26	0.0909288480826826\\
27	0.0115680051346244\\
28	0.0149809146147973\\
29	0.0849092215483358\\
30	0.039814538307721\\
31	0.0435915246933554\\
32	0.00317875053471998\\
};
\addlegendentry{based on reduction of subsystems}

\addplot [color=black, draw=none, mark=triangle, mark options={solid, rotate=270, black}]
  table[row sep=crcr]{%
1	1.50418649871178e-05\\
2	1.77502518542077e-05\\
3	0.00183760518162192\\
4	3.3188868676057e-06\\
5	0.000636906065560952\\
6	0.00206050104555412\\
7	0.140673581750937\\
8	0.00019015496834216\\
9	2.40110105443066e-05\\
10	3.29153336253174e-06\\
11	0.000853503410901402\\
12	0.000144902474423027\\
13	0.000985835732440148\\
14	6.04468834222777e-05\\
15	0.295943512270338\\
16	0.000295321794239509\\
17	0.000401672371874046\\
18	3.67371843430772e-05\\
19	0.00652623448654331\\
20	5.44203709940598e-05\\
21	4.42463955889151e-05\\
22	0.000545090012080097\\
23	0.208447110551895\\
24	0.00480968687609478\\
25	0.000420994654532263\\
26	1.22991590559305e-05\\
27	0.000190026083831126\\
28	7.88022620187967e-06\\
29	0.0421629351828864\\
30	4.7090256744241e-05\\
31	0.14438417240318\\
32	0.00160284560089078\\
};
\addlegendentry{based on vector fitting}

\end{axis}
\end{tikzpicture}%
 \caption{Relative errors in the optimal gains  for Algorithm \ref{parameteroptimizationwithmethod1}  and Algorithm 
 \ref{parameteroptimizationwithmethod2}} \label{RelErrGains} \end{center}
\end{figure}

However, since the cost function, the $\mathcal{H}_2$ norm, can be flat with respect to some damping parameters, the quality of the surrogate optimization is better illustrated in Figure \ref{RelErrH2}  where we show  the relative errors in the the cost function. For the optimal gain $\pvec^\star$ obtained by solving the full problem \eqref{ouroptprob}
and the optimal gain $\hat{\pvec}^\star$
obtained by solving the surrogate problem \eqref{suroptprob},
the relative error is computed by ${ \frac{\Large| \|\Hfull(\cdot;  \pvec^{\star} )\|_{\mathcal{H}_2}-\|\Hred(\cdot; \pvec^\star)\|_{\mathcal{H}_2}\Large|}{\|\Hred(\cdot; \pvec^{\star})\|_{\mathcal{H}_2}} }$. These results are illustrated in In Figure \ref{RelErrH2}.
Even though both algorithms yield accurate results, the surrogate optimization with Algorithm \ref{parameteroptimizationwithmethod2} is consistently better with the largest relative error in the order of $10^{-4}$.

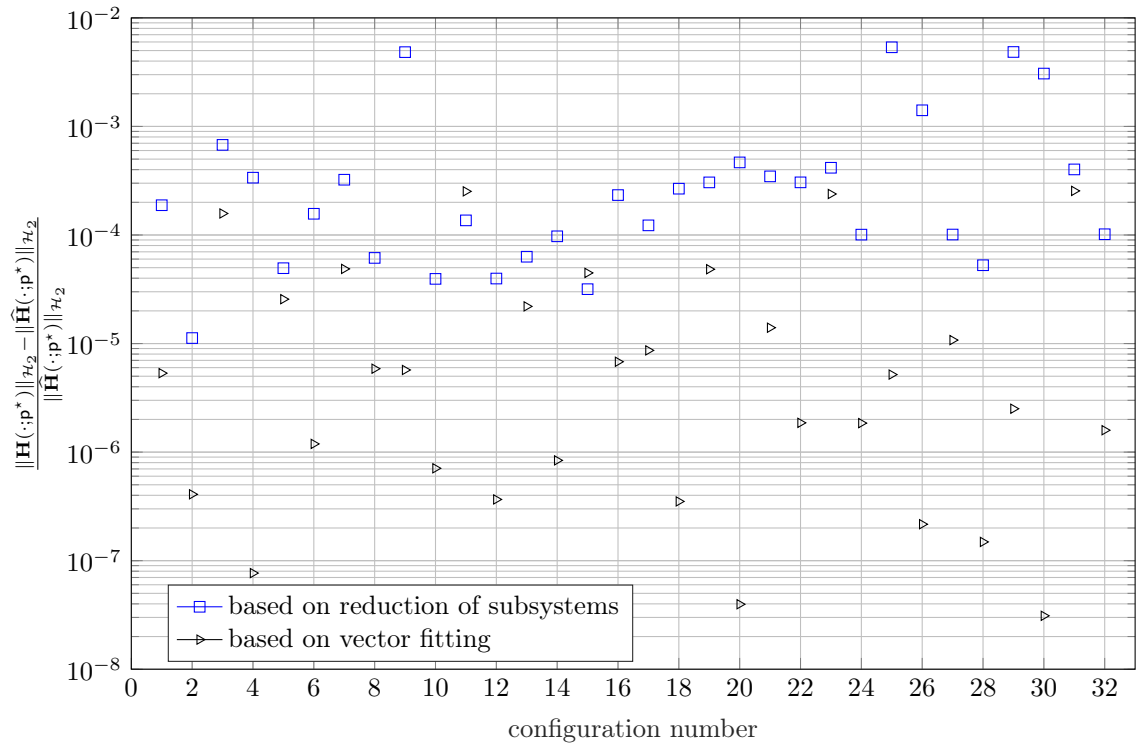
\begin{figure}[h]\begin{center}
%
%
\begin{tikzpicture}

\begin{axis}[%
width=5.2in,
height=3.4in,
at={(0.758in,0.481in)},
scale only axis,
xmin=0,
xmax=33,
xlabel style={font=\color{white!15!black}},
xlabel={configuration number},
ylabel={$\frac{\|\mathbf{H}(\cdot;  \pvec^{\star} )\|_{\mathcal{H}_2}-\|\mathbf{\widehat H}(\cdot; \pvec^{\star})\|_{\mathcal{H}_2}}{\|\mathbf{\widehat H}(\cdot; \pvec^{\star})\|_{\mathcal{H}_2}} $},
ymode=log,
ymin=1e-08,
ymax=0.01,
yminorticks=true,
axis background/.style={fill=white},
xmajorgrids,
ymajorgrids,
yminorgrids,
legend style={at={(0.5,0.13)}, legend cell align=left, align=left, draw=white!15!black}
]
\addplot [color=blue, draw=none, mark=square, mark options={solid, blue}]
  table[row sep=crcr]{%
1	0.000188125354508749\\
2	1.1238775833773e-05\\
3	0.000676257323212229\\
4	0.000337313893024712\\
5	4.94627247437791e-05\\
6	0.000156850506806493\\
7	0.000322982228306512\\
8	6.1493019388938e-05\\
9	0.00483705210801925\\
10	3.9425513597948e-05\\
11	0.00013637606238636\\
12	3.97204170823388e-05\\
13	6.30834369211221e-05\\
14	9.72676042702569e-05\\
15	3.17010044024441e-05\\
16	0.000233264554337414\\
17	0.000122695634239343\\
18	0.000267061203549691\\
19	0.000305236860006198\\
20	0.000466339310654872\\
21	0.000346777169486422\\
22	0.000305503189601267\\
23	0.000416263019831142\\
24	0.000100774155853269\\
25	0.00536152885120921\\
26	0.00140867025557642\\
27	0.000100972010611159\\
28	5.26736253175725e-05\\
29	0.00485179073804885\\
30	0.00306926983116812\\
31	0.000401891732012514\\
32	0.00010162619388106\\
};
\addlegendentry{{based on reduction of subsystems}}

\addplot [color=black, draw=none, mark=triangle, mark options={solid, rotate=270, black}]
  table[row sep=crcr]{%
1	5.33563506538586e-06\\
2	4.0856303463607e-07\\
3	0.000157864081517049\\
4	7.68479173662497e-08\\
5	2.56331654282833e-05\\
6	1.19024986845655e-06\\
7	4.87127622268622e-05\\
8	5.86572637741794e-06\\
9	5.71010777847803e-06\\
10	7.09529383541401e-07\\
11	0.000252342792067439\\
12	3.66522730070886e-07\\
13	2.19986474251149e-05\\
14	8.39317553099207e-07\\
15	4.47532666437398e-05\\
16	6.80058157042293e-06\\
17	8.65182652411145e-06\\
18	3.51431149825323e-07\\
19	4.8385130674129e-05\\
20	3.9685116710884e-08\\
21	1.3996916955602e-05\\
22	1.86084557258651e-06\\
23	0.000238678449420641\\
24	1.84999100420957e-06\\
25	5.18011989834517e-06\\
26	2.16666858616632e-07\\
27	1.07611280069963e-05\\
28	1.48837651116202e-07\\
29	2.50930834129693e-06\\
30	3.10584829436657e-08\\
31	0.000254820104243881\\
32	1.59093588734045e-06\\
};
\addlegendentry{based on vector fitting}

\end{axis}
\end{tikzpicture}%
 \caption{Relative errors for $\mathcal{H}_2$ norm at optimal gain for Algorithm \ref{parameteroptimizationwithmethod1}  and Algorithm \ref{parameteroptimizationwithmethod2}} \label{c}  \end{center}
 \label{RelErrH2}
\end{figure}

Another important quantity to measure is the speed-up compared to the full problem.  Table 1 shows the average speed-ups for the optimization process obtained by both algorithms.

\begin{table}[h!]
\begin{center}
\begin{tabular}{|c||c|c|}
  \hline
      & Algorithm \ref{parameteroptimizationwithmethod1}& Algorithm \ref{parameteroptimizationwithmethod2} \\ \hline \hline
  Acceleration factor            &   7.8  & 60   \\\hline
  \hline
\end{tabular}
\end{center}
\caption{ Acceleration factors using surrogate optimization}
\end{table}
Both methods have optimized parameters with satisfactory relative errors with considerable acceleration of optimization process. For this damping optimization problem Algorithm \ref{parameteroptimizationwithmethod2}  not only produced more accurate results but also yielded bigger speed-up  than Algorithm \ref{parameteroptimizationwithmethod1}.  Therefore, for this problem,  Algorithm \ref{parameteroptimizationwithmethod2} was more efficient. 
However, we also note that 
Algorithm \ref{parameteroptimizationwithmethod1}  based on subsystem reduction include estimation of the true error $f(\pvec)$
as opposed to the sampling-based error 
$e(\pvec)$, which might improve the robustness of the optimization process.

\section{Conclusions}
We have introduced a framework for producing reduced order models of dynamical systems having an affine, low-rank parametric structure. The new framework does not require any sampling in the parameter domain and instead parametrically combines intermediate subsystems that are nonparametric. Our approach can guarantee uniform stability of the aggregated reduced model across the entire parameter domain in many cases.  Beyond the computational examples we provide for illustration, we show in some detail how this approach can be deployed efficiently in parameter optimization problems as well. 

\vspace{-2ex}
\section*{Acknowledgment}
\vspace{-1ex}
This work has been supported in parts by Croatian Science Foundation under the project `Vibration Reduction in Mechanical Systems' (IP-2019-04-6774) and project `Control of Dynamical Systems' (IP-2016-06-2468). 
The work of Beattie was supported in parts by NSF through Grant DMS-1819110. 
The work of Gugercin was supported in parts by NSF through Grants DMS-1720257 and  DMS-1923221. 

\vspace{-2ex}
\bibliographystyle{spmpsci}
\bibliography{sample}

\end{document}